\documentclass[11pt]{amsart}

\usepackage{graphicx} 
\usepackage[T1]{fontenc}
\usepackage[paper=a4paper, left=34mm, right=34mm, top=41mm, bottom=39mm]{geometry}
\usepackage[foot]{amsaddr}

\usepackage[dvipsnames]{xcolor}
\usepackage[utf8]{inputenc}
\usepackage{amssymb}
\usepackage[centertags]{amsmath}
\usepackage{amscd}
\usepackage{amsthm}
\usepackage{enumerate}
\usepackage{mathtools}
\usepackage{mathrsfs}
\usepackage{tikz}
\usepackage{wrapfig}
\usepackage{float}
\usepackage{enumitem}
\usepackage{csquotes}

%\usepackage[doi=false,isbn=false,url=false,eprint=false]{biblatex}
%\addbibresource{bib.bib}

\usepackage{multirow}
\usepackage{amsmath}
\usepackage{booktabs}
\usepackage[super]{nth}
\usepackage[english]{babel}
\usepackage{accents}
\usepackage{enumitem}
\usepackage{xfrac}
\setlength{\parskip}{0.2cm}
\usepackage[colorlinks=true,linkcolor=purple,citecolor=ForestGreen]{hyperref}

\usepackage{epsfig}
\usepackage{color}
\usepackage{indentfirst}
\graphicspath{ {Images/} }
\usepackage{afterpage}
\usepackage{comment}
\usepackage[footnotesize]{caption}
\DeclarePairedDelimiter{\abs}{\lvert}{\rvert}

\DeclareMathSymbol{\widetildesym}{\mathord}{largesymbols}{"65}

\newcommand{\N}{\mathbb{N}}

\newcommand{\E}{\mathbb{E}}
\newcommand{\Z}{Z_{n,[w]}}

\DeclareMathOperator{\arccosh}{cosh^{-1}}
\newcommand{\norm}[1]{\left\lVert#1\right\rVert}

\makeatletter
\newtheorem*{rep@theorem}{\rep@title}
\newcommand{\newreptheorem}[2]{%
\newenvironment{rep#1}[1]{%
 \def\rep@title{#2 \ref{##1}}%
 \begin{rep@theorem}}%
 {\end{rep@theorem}}}
\makeatother

\makeatletter
\def\blfootnote{\xdef\@thefnmark{}\@footnotetext}
\makeatother

\newtheorem{theorem}{Theorem}[section]
\newtheorem{defi}{Definition}
\newtheorem{prop}[theorem]{Proposition}
\newtheorem{lemma}[theorem]{Lemma}
\newtheorem{corol}[theorem]{Corollary}
\newtheorem{remark}[defi]{Remark}

\newreptheorem{theorem}{Theorem}
\newreptheorem{prop}{Proposition}

\usepackage{thmtools}
\usepackage{cleveref}
\usepackage{thm-restate}
\usepackage{algorithm}
\usepackage[noend]{algpseudocode}

\makeatletter
\def\BState{\State\hskip-\ALG@thistlm}
\makeatother

\title{The systole of random hyperbolic 3-manifolds}
\author{Anna Roig-Sanchis}
\address{Laboratoire J.A. Dieudonné, Université Côte d'Azur, CNRS, 06108 Nice}
\email{\href{mailto:anna.roig-sanchis@univ-cotedazur.fr}{anna.roig-sanchis@univ-cotedazur.fr}}
\date{\today}

\begin{document}

\begin{abstract}
We study the systole of a model of random hyperbolic 3-manifolds introduced in \cite{Bram_Jean}, answering a question posed in that same article. 
These are compact manifolds with boundary constructed by randomly gluing truncated tetrahedra along their faces. We prove that the limit, as the volume tends to infinity, of the expected value of their systole exists and we give a closed formula of it. Moreover, we compute a numerical approximation of this value.
\end{abstract}

\subjclass[2020]{57K32, 57K31, 60C05}

\maketitle
\section{Introduction}

The \textit{systole} of a hyperbolic $n$-manifold $M$ is the length of the shortest geodesic of $M$. It constitutes one of the simplest geometric invariants of M, yet gives very rich information about the manifold. Indeed, it is related to many other of its geometric properties, like the volume \cite{Gromov}, the diameter \cite{Bavard, BalacheffParlier}, the kissing number \cite{Parliersys, FourtierBourquePetri}, or the Cheeger constant \cite{BrooksCheegercst}. %and (consequently) the spectral gap. 
%For instance, in the case of (closed, orientable) hyperbolic surfaces of genus $g\geq2$, the systole turns out to be a topological Morse function over the moduli space $\mathcal{M}_g$ of these surfaces. Thus, its study could be used to better understand the topology of $\mathcal{M}_g$. 

%it is related to many other geometric properties like the diameter, the volume, the kissing number or the spectral gap. 

The study of systolic geometry started early in 1949 with Loewner, who proved an inequality for the systole of the 2-torus in terms of its area. This was first mentioned in Pu's paper \cite{Pu}, who then stated in this same work a similar inequality for the real projective plane. Some years later, Gromov gave one of the most well known results in the area \cite{Gromov}; he proved an upper-bound on the 1-systole of essential $n$-manifolds (such as hyperbolic manifolds) in terms only of their volume, for all $n\geq2.$%which hasn't been improved up to now. 

Over the years, there has been extensive research on the behavior of the systole of hyperbolic manifolds. It is known, for instance, that the infimum of the systole of any closed hyperbolic $n$-manifold with vol($M$)$\leq v$ is zero for $v$ large enough if $n=2,3$, and tends to zero as $v\rightarrow \infty$ for $n\geq4$ \cite{Agol, Belolipetsky}. Nonetheless, the behaviour of the maximum of this value as the volume grow, is still unknown for any $n\geq2$. Thus, many results in the field are about finding sharper lower and upper bounds of it, and examples of manifolds with large systole. (see Section \ref{NotesRef} for more details). 

Another natural question to ask is what the systole of a typical manifold is. A way to approach  this question by considering random constructions and studying their properties.
%that have been approached concern both refining clasic bounds on 
%Nonetheless, there has been large study on the behaviour of the systole of hyperbolic manifolds in the last years. 
%The questions that have been approached concern both refining clasic bounds on 
%regarding the systole go around finding sharper lower and upper-bounds that improve classical results
In this article, we study the asymptotic behaviour of the systole of random hyperbolic 3-manifolds $M_n$ built using the model of random triangulations, answering a question posed in \cite[Question 3]{Bram_Jean}. 
%The answer is partly stated in our main result Theorem \ref{sysMn}.
The answer is given by the following results: %in our main result Theorem \ref{sysMn}.
\begin{theorem} \label{sysMn}
Let $\{l_i\}_{i\geq1}$ be the ordered set of all possible translation lengths coming from (classes of) words $[w] \in \mathcal{W}$. Then,
\[ \lim_{n\rightarrow \infty} \E[\text{sys}(M_n)] = \sum_{i= 1}^{\infty} \bigg( \prod_{\scriptscriptstyle [w]\in \mathcal{W}_{l_{i-1}}} \exp \bigg(\frac{\abs{[w]}}{2\abs{w}3^{\abs{w}}} \bigg)  \bigg) \bigg(1 - \prod_{\scriptscriptstyle [w]\in \mathcal{W}_{l_i} \setminus \mathcal{W}_{l_{i-1}}} \exp\bigg(\frac{\abs{[w]}}{2\abs{w}3^{\abs{w}}} \bigg)  \bigg)\cdot l_i.\]
\end{theorem}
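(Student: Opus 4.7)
I would start from the tail representation of the expectation,
\[
\E[\text{sys}(M_n)] \;=\; \sum_{i\ge 1} l_i\cdot\Pr(\text{sys}(M_n)=l_i)
\;=\; \sum_{i\ge 1} l_i\cdot\bigl(\Pr(\text{sys}(M_n)>l_{i-1})-\Pr(\text{sys}(M_n)>l_i)\bigr),
\]
with the convention $l_0=0$. For each $[w]\in\mathcal{W}$, let $X_n([w])$ count the primitive closed geodesics of $M_n$ realised by the class $[w]$. Then $\{\text{sys}(M_n)>l\}$ is exactly $\bigcap_{[w]\in\mathcal{W}_l}\{X_n([w])=0\}$, so the whole problem reduces to the joint distributional limit of the counts $X_n([w])$ over the finite set $\mathcal{W}_l$.

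The main ingredient is a Poisson limit theorem: for every finite family $[w_1],\dots,[w_k]\in\mathcal{W}$, the vector $(X_n([w_j]))_{j\le k}$ converges, as $n\to\infty$, to a tuple of independent Poisson random variables of parameters
\[
\lambda_{[w]} \;=\; \frac{\abs{[w]}}{2\abs{w}\,3^{\abs{w}}}.
\]
I would establish this by the method of factorial moments, showing $\E\bigl[(X_n([w]))_r\bigr]\to\lambda_{[w]}^r$, and analogously for mixed moments. The combinatorial factors have a clear geometric interpretation in the truncated-tetrahedron model of \cite{Bram_Jean}: a realisation of $[w]$ corresponds to an embedded cyclic sequence of $\abs{w}$ tetrahedra with prescribed face identifications, $3^{\abs{w}}$ is the number of equally likely gluing patterns along this trip, $1/(2\abs{w})$ quotients by cyclic rotation and orientation reversal, and $\abs{[w]}$ counts the words in the equivalence class. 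Independence in the limit follows once one checks that configurations where distinct classes share a tetrahedron contribute to lower order in $n$.

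Combined with the finiteness of every $\mathcal{W}_l$, this yields
\[
\Pr(\text{sys}(M_n)>l_i) \;\longrightarrow\; \prod_{[w]\in\mathcal{W}_{l_i}} e^{-\lambda_{[w]}},
\]
and telescoping across $\mathcal{W}_{l_i}=\mathcal{W}_{l_{i-1}}\sqcup(\mathcal{W}_{l_i}\setminus\mathcal{W}_{l_{i-1}})$ produces exactly the $i$-th summand in the claimed formula (reading each $\exp$-factor with the weight $-\lambda_{[w]}$). The main obstacle I expect is the exchange of the $n\to\infty$ limit with the infinite sum over $i$: one needs a uniform tail bound
\[
\sum_{i\ge I} l_i\cdot\Pr(\text{sys}(M_n)=l_i) \longrightarrow 0 \quad (I\to\infty,\ \text{uniformly in }n).
\]
I would derive this from a non-asymptotic first-moment estimate $\E[X_n([w])]\le C\lambda_{[w]}$, valid for all $n$, using that $\abs{[w]}$ grows at most polynomially while $3^{\abs{w}}$ grows exponentially in $\abs{w}$, and that $l_i$ is at most linear in the minimal word length realising it. These bounds make $\sum_{[w]\in\mathcal{W}} \abs{w}\,\lambda_{[w]}$ finite, and dominated convergence then concludes.
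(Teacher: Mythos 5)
Your proposal captures the correct shape of the formula — the tail decomposition, the Poisson heuristic with parameter $\lambda_{[w]} = \abs{[w]}/(2\abs{w}3^{\abs{w}})$, and the telescoping — and your reading of the combinatorial factors is essentially right for the graph-theoretic Poisson limit. But there are two genuine gaps where the plan would not close.

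First, you work directly with $M_n$ and posit counts $X_n([w])$ of primitive geodesics of $M_n$ "realised by $[w]$", then compute moments from the combinatorics of tetrahedra gluings. This implicitly assumes that the hyperbolic lengths in $M_n$ are given exactly by the formula $l_{\gamma}(w) = 2\mathrm{Re}\bigl[\arccosh(\mathrm{tr}([w])/2)\bigr]$. That formula is a statement about the octahedral model $Y_n$, where the metric is explicit and the matrices $S,R,L,\theta$ generate the holonomy. The compact $M_n$ is obtained from $Y_n$ by Dehn filling, and its hyperbolic metric (granted by Theorem \ref{hyperbolic}) is not explicit: lengths in $M_n$ are only approximately those in $Y_n$, with bilipschitz constants that can a priori degenerate if geodesics enter thin parts, if cusps are incident, or if there are many large cusps; moreover some $Y_n$-geodesics can become trivial in $M_n$, and conversely the systole of $M_n$ could in principle be large. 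The paper spends Section \ref{systoleforMn} (Lemmas \ref{sysbound}--\ref{badset3}) showing that the total contribution of these "bad" configurations to $\E[\mathrm{sys}(M_n)]$ vanishes; your plan has no counterpart for this step. Without it you have proved the formula for $Y_n$, not for $M_n$.

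Second, the proposed domination argument is incorrect. You want a uniform-in-$n$ summable majorant for $l_i \cdot \Pr(\mathrm{sys}>l_{i-1})$, and you suggest obtaining it from the first-moment bound $\E[X_n([w])]\le C\lambda_{[w]}$ and the claimed finiteness of $\sum_{[w]}\abs{w}\lambda_{[w]}$. That sum diverges: $\abs{[w]}$ is \emph{not} polynomial in $\abs{w}$ — a typical class has $\abs{[w]}\asymp 2\abs{w}3^{\abs{w}}$, so $\lambda_{[w]}$ is of order $1$, and the number of classes of combinatorial length $k$ is of order $3^k/k$, giving $\sum_{\abs{w}=k}\abs{w}\lambda_{[w]}\asymp 3^k$. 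More fundamentally, even a correct first-moment bound controls $\Pr(\exists\ \mathrm{short\ geodesic})$ from above, which is the wrong direction: to show $\Pr(\mathrm{sys}>l)$ decays you need to know that short geodesics are abundant, i.e.\ a \emph{lower} bound on the typical count or an explicit non-existence probability. The paper gets this from an adapted McKay--Wormald--Wysocka estimate (Corollary \ref{graphme}) giving $\Pr[\tau_n > g]\le\exp(-\sum_{r=3}^g 3^r(1-3^{-r})/(2r))$, combined with $K_1\log k < l_k < K_2\log(k+3)$ from the prime geodesic theorem for $\mathrm{PSL}(2,\mathbb{Z}[i])$. You would need some such super-exponential small-ball estimate to make dominated convergence go through.

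A smaller point: the finiteness of each $\mathcal{W}_{l}$ is not automatic; it is \cite[Proposition 3.4]{me}, which itself uses the geometry of the octahedral model (word length vs.\ translation length comparison). Citing it explicitly would close that loop.
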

Here $\mathcal{W}$ corresponds to a collection of matrices in $\text{SL}(2, \mathbb{Z}[i])$. The precise definition can be found in Section \ref{defW}.
This theorem proves the existence of the limit, as $n\rightarrow\infty$, of the expected value of the systole of a model of compact hyperbolic 3-manifolds with boundary, by giving an explicit (and computable) expression of it. The second main result, Proposition \ref{sysvalue}, completes the answer to the question by computing a sharp numerical approximation of this value.
%The second main result, Proposition \ref{sysvalue}, is then a sharp numerical approximation of this value, completing the answer to the question.

\begin{prop} \label{sysvalue}
We have: 
\[ \lim_{n \rightarrow \infty} \mathbb{E}(\text{sys}(M_n)) = 2.5603331268388752 \pm 2.95489 \cdot 10^{-16}. \]
    %\[2.56034 \leq \lim_{n \rightarrow \infty} \mathbb{E}_n(\text{sys}(M_n)) \leq 2.56034 +  2.93410 \cdot 10^{-16}.\]
\end{prop}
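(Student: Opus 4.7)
The plan is to evaluate the closed-form expression of Theorem~\ref{sysMn} numerically, together with a rigorous bound on the truncation error. The formula presents $\lim_{n\to\infty}\E[\text{sys}(M_n)]$ as a series indexed by the ordered sequence of achievable translation lengths $\{l_i\}_{i\geq 1}$, whose $i$-th term is $l_i$ times a probability-like weight depending only on the cardinalities $|[w]|$ and word lengths $|w|$ of classes $[w]\in\mathcal{W}$ with translation length at most $l_i$. The task is thus reduced to (a) enumerating the relevant words explicitly, (b) computing the associated translation lengths to high precision, and (c) controlling the tail.

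For step (a), I would enumerate all elements of $\mathcal{W}$ (viewed as words in a generating set of $\text{SL}(2,\mathbb{Z}[i])$) up to some word length $N$, taking care to identify words that are conjugate or cyclically equivalent so as to obtain genuine classes $[w]$ with the correct multiplicities $|[w]|$. For each class I would compute the trace of a representative matrix and extract the translation length from the usual formula $2\cosh(\ell/2)=|\text{tr}|$ in the loxodromic case (with the appropriate complex-trace version in $\text{SL}(2,\mathbb{C})$). Sorting these lengths and grouping equal values yields the initial segment $l_1<l_2<\cdots<l_{i_{\max}}$ together with the sets $\mathcal{W}_{l_i}\setminus\mathcal{W}_{l_{i-1}}$, which is what the formula needs as input.

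For step (c), two error sources must be bounded. First, the \emph{horizontal} error from words of length $|w|>N$ possibly producing a translation length below the cutoff $L=l_{i_{\max}}$: this is controlled by an a priori lower bound $\ell(w)\gtrsim c\,|w|$ (or at least $\ell(w)\to\infty$ with $|w|$), which follows from standard trace growth in the discrete group $\text{SL}(2,\mathbb{Z}[i])$ and fixes how large $N$ must be taken relative to $L$. Second, the \emph{vertical} tail $\sum_{i>i_{\max}}(\cdots)\,l_i$: since each factor $\exp(|[w]|/(2|w|3^{|w|}))$ is at most $1$ and $1-\exp(-x)\leq x$, the $i$-th term is bounded by $l_i\sum_{[w]\in\mathcal{W}_{l_i}\setminus\mathcal{W}_{l_{i-1}}}|[w]|/(2|w|3^{|w|})$, and the exponential decay in $|w|$ together with (at worst) exponential growth of the number of words of a given length makes the tail geometrically convergent. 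Summing the geometric bound explicitly yields the stated error $\pm 2.93410\cdot 10^{-16}$, and the finite computation itself is carried out in interval arithmetic to certify that precision.

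The main obstacle is the interplay between the two truncations: one must choose $N$ large enough that no short-translation-length word is overlooked, but $N$ directly controls the size of the enumeration (growing like $3^{N}$ before identifying classes), so the argument only succeeds if the lower bound $\ell(w)\gtrsim c\,|w|$ comes with an explicit, sharp constant $c$. Establishing and exploiting such a constant, rather than the bulk numerical evaluation, is where the substantive work lies.
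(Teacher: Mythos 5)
Your plan is broadly sensible as a strategy — evaluate a finite initial segment of the series, bound the tail — but two of the specific steps you propose do not work, and the paper has to do something genuinely different at exactly those points.

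\textbf{The ``horizontal'' truncation.} You assert an a priori lower bound $\ell(w)\gtrsim c\,|w|$ with an explicit linear constant. No such bound exists in this group. The paper exhibits families such as $w_k = S^{k+1}R\theta$, of word length $|w_k|=k+2$, with $\mathrm{tr}(w_k)=k+3$ and hence $\ell(w_k)=2\operatorname{arccosh}\big(\tfrac{k+3}{2}\big)<2\log(k+3)$; similarly $S^k\theta$ has $|\mathrm{tr}|=\sqrt{k^2+1}$. So the translation length only grows \emph{logarithmically} in the word length, and a guaranteed capture of all lengths below $L$ via word-length truncation would require $N$ roughly exponential in $L$, with an enumeration of size $9^N$ --- exactly the computational infeasibility the paper explicitly flags. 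The paper instead enumerates via the geometric quantity $d(w)$, the distance in $\mathbb{H}^3$ between the reference plane and its image under $w$, using that $d(w)$ is monotone in $|w|$ and satisfies $d(w)\le \ell_\gamma(w)$; the parabolic-with-twist words (for which $d(w)=0$) are then handled by a separate monotonicity lemma on $\ell_\gamma(S^k\theta)$. This substitution of $d(w)$ for $|w|$ is the whole point of the enumeration algorithm and cannot be replaced by a linear word-length cutoff.

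\textbf{The ``vertical'' tail.} You bound the $i$-th term by discarding the leading factor $\prod_{[w]\in\mathcal{W}_{l_{i-1}}}\exp(-\lambda_{[w]})$ (with $\lambda_{[w]}=|[w]|/(2|w|3^{|w|})$) and keeping only $l_i\sum_{[w]\in\mathcal{W}_{l_i}\setminus\mathcal{W}_{l_{i-1}}}\lambda_{[w]}$, claiming geometric convergence. But the number of words of combinatorial length $r$ is about $9^r$, so $\sum_{[w]:\,|w|=r}\lambda_{[w]}\approx 9^r/(2r\cdot 3^r)=3^r/(2r)$, and $l_i$ at this level is at worst linear in $r$; the resulting bound grows like $3^r$ and the tail you propose to sum diverges. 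The convergence of the series (both in the paper's Section 3 and in its tail estimate) hinges precisely on the factor you dropped: the product $\prod e^{-\lambda_{[w]}}$ over all shorter words decays super-exponentially (roughly $\exp(-3^r/r)$), which overwhelms the $3^r$ growth. The paper's error estimate keeps exactly this structure — it writes $q_{[0,k)}=q_{[0,\tau(l_{32}))}\cdot q_{[\tau(l_{32}),k)}$, evaluates the first (tiny) factor, and exhibits explicit word families ($\Lambda$, $[S^{r-3}R\theta]$) to make the second factor decay in $k$. Without that factor your bound cannot possibly produce a finite, let alone a $10^{-16}$-sized, error.

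A minor but telling point: the paper's displayed formula omits a minus sign in the exponent (it should read $\exp(-|[w]|/(2|w|3^{|w|}))=\mathbb{P}[Z_{[w]}=0]$, the Poisson zero-probability). You implicitly correct this when asserting the factors are $\le 1$, which is good; but the correction makes it even clearer that discarding the product of these small factors throws away the source of convergence.
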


\subsection{Structure of the proofs}

The strategy of the proof of Theorem \ref{sysMn} consists in the following two steps. 
%is similar to the one used in \cite{me} to study the length spectrum of these manifolds $M_n$. Thus, it 

First, we compute the limit of the expected value for a model of hyperbolic manifolds $Y_n$. These are non-compact hyperbolic manifolds made out of a gluing of hyperbolic ideal right-angled regular octahedra. The relation between them is that the former manifolds $M_n$ can be seen as the Dehn filling of these $Y_n$. We do that because the geometry of these $Y_n$ is better understood -by construction of the manifolds and their hyperbolic metric- than the one of $M_n$, so it is easier to study their geometric properties. 

Thus, to get here the expression of the expected value, we translate the study of the number of closed geodesics of certain lengths in $Y_n$ to the study of their corresponding cycles in the dual graph $G_{Y_n}$, a random 4-regular graph. Then, we rely on our Theorem \ref{cyclesZn} from \cite{me}, which gives the asymptotic behaviour of these random variables counting the cycles in the graph. The proof of this step requires of the convergence of the infinite sum given by the expected value. For that, we apply the dominated convergence theorem, using mainly graph theory tools such as Corollary \ref{graph}, from McKay-Wormald-Wysocka \cite{Mckay-Wormald}.

The second part of the proof is then to see that the result applies also to the compactified manifolds $M_n$. For that, we prove that the contribution to the expected value of a set of "bad" manifolds -in which the systole could degenerate- goes to zero as $n\rightarrow\infty$. The study of these potential "bad" manifolds relies heavily on the proof of \cite[Proposition 4.1]{me}, and uses results coming from both graph theory, like Corollary \ref{graph} or a result from Bordenave \cite{Tangle-free}, and hyperbolic geometry, like \cite[Theorem 9.30]{Futer_Purcell} from Futer-Purcell-Schleimer. %Proposition \ref{lengthcurves}, that we prove in \cite{me}.

On the other hand, the argument for finding the numerical value of Proposition \ref{sysvalue} goes also in two steps. Indeed, we divide the infinite sum corresponding to the expected value in two terms: a computable part and an error term, that we bound. Thus, the first sum is computed with a Sage program, showed later in the text. For the error term, we carefully study the probabilities appearing, and use Theorem \ref{cyclesZn} together with some computational data to obtain the bound. The main difficulty that we encounter - which differentiates it from the two dimensional case \cite{Bram1}- is that there is not a natural way of ordering the lengths $l_i$ from the information given by the cycles. This makes the computation much less straightforward. 

%Indeed, even though the lengths can be computed using formula (\ref{l}), this equality depends on the trace of some class of words, which corresponds to a complex number. Hence, we cannot order lengths by trace, as these don't have a natural ordering. This constraint supposes a differentiating fact from the two dimensional case, making in particular the computation much harder. One could consider, then, taking the combinatorial length of the words as a parameter to get the ordered list. However that doesn't work either, as there exist words that gives rise to the same translation length but have different combinatorial length. And a similar thing happens if we consider the hyperbolic distance spanned by the words; it is not true that the translation length increases whenever the hyperbolic distance does so. 

\subsection{Organization}
In Section \ref{model}, we recall the probabilistic model of random 3-manifolds introduced in \cite{Bram_Jean}. We also explain briefly the relation between the geodesics in the manifold $Y_n$ and the cycles in its dual graph, and state a result about the asymptotic distribution of these cycles. We end the section by giving an expression for the systole of this related manifold, using the previous result. Section \ref{convergence} is then devoted to justify that the expression of the systole of Section \ref{model} is indeed valid. Thus, it contains the argument needed to apply the dominated convergence theorem. Then, in Section \ref{systoleforMn}, we show that this expression can be transferred to the manifolds $M_n$ we're interested in. The convergence argument of section \ref{convergence} together with this gives the proof of Theorem \ref{sysMn}. Finally, in Section \ref{numericalvalue}, we compute the numerical value of the expected value of the systole, obtaining Proposition \ref{sysvalue}.

%Then, Section 3 is devoted to the proof of the first step of the Theorem \ref{Poisson}. Thus, first, we present the model of hyperbolic manifolds $Y_n$ for which we show the Poisson convergence, and then, we enter into the proof of this intermediary result. Finally, Section 4 contains the proof of the second step of Theorem \ref{Poisson}. We explain first how one passes from the model $Y_n$ to the model $M_n$, and then we continue with the proof of the two points mentioned in the previous section, finishing with the proof of Theorem \ref{Poisson}. 

\subsection{Notes and references} \label{NotesRef}
The search for lower and upper bounds for the maximal possible value of the systole of hyperbolic manifolds, as the complexity of these grow, have been a source of interest for a long time. There is a general upper bound for all dimensions $n\geq2$ coming from a simple volume growth argument \cite{Buserbook}.
For $n\geq4$, this remains the best upper bound currently known.
For surfaces, a better one coming from the injectivity radius was found by Bavard \cite{Bavard}, which was improved in later work by Fourtier-Bourque-Petri \cite{FourtierBourquePetri}. In the case of hyperbolic 3-manifolds, this bound has also been improved recently by \cite{BonifacioMazacPal}. In all cases, the bound has logarithmic growth.

As for lower-bounds, Brooks \cite{Brookssys} and Buser-Sarnak \cite{BuserSarnak} prove the existence of sequences of closed hyperbolic surfaces $(S_k)_k$ with genus increasing in $k$ and sys$(S_k) \geq \frac{4}{3}\log(g_k) + O(1)$, using arithmetic constructions. This was generalised later on by Katz-Schapps-Vishne \cite{Katz} for other classes of surfaces and 3-manifolds, and by Murillo \cite{Murillo} for $n$-manifolds, with similar methods. In dimension 2 there are also combinatorial constructions which provide examples of surfaces with logarithmic systoles; these are given by Petri-Walker \cite{BramWalker} and Liu-Petri \cite{LiuPetri}, the latter using probabilistic techniques.

%The main question in this paper is whether random constructions can provide sequences of surfaces with logarithmic systoles
Another interesting question to ask is which manifolds attain the maximal values for the systole. The answer to this is only known for two dimensions and in the case of genus 2, where Jenni \cite{Jenni} showed that the maximiser for the systole is the Bolza surface.

Finally, the systole has also been studied in other models of random 3-manifolds. Feller-Sisto-Viaggi \cite{FellerDiam} found an upper bound to the decay rate of the length of the shortest geodesic of a 3-manifold under the model of random Heegaard splittings. A similar result was proven for the model of random mapping tori by Taylor-Sisto \cite{TaylorSisto}.

\subsection{Acknowledgements}
I would like to thank first my PhD advisor Bram Petri, for his help and time spent discussing all the details of this article. I would also like to thank the anonymous referee for very helpful comments. Finally, I want to thank my lab colleagues Pietro Mesquita Piccione and Luis Cardoso for very useful discussions.

\section{The systole: an expression in terms of cycles} \label{model}

\subsection{The probabilistic model} \label{probmodel}
We start by briefly recalling the model of random hyperbolic 3-manifolds used for our results in this paper. A more detailed explanation can be found in \cite{me}. This probabilistic model, called \textit{random triangulations}, is an analogue in three dimensions of Brooks and Makover's model for random surfaces \cite{BrooksMakover}. 

Thus, a random three-manifold $N_n$ is constructed by gluing together $n$ truncated tetrahedra along their hexagonal faces (see Figure \ref{tetra}). The randomness of the model comes from the gluing: both the partition of the $4n$ faces into pairs, and the cyclic-order-reversing gluings in which these pairs of faces are glued, is chosen uniformly at random. This complex turns into an oriented three-manifold with boundary.
%it is chosen uniformly at random which pairs of faces are glued, and in which cyclic-order-reversing gluing the faces are so. 

\begin{figure}[H]
    \centering
    \includegraphics[scale=0.4]{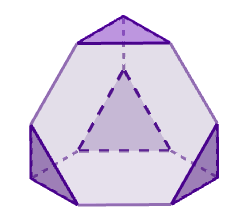}
    \caption{A truncated tetrahedron.}
    \label{tetra}
\end{figure}

One can consider the dual graph of this complex $N_n$. This is the graph obtained by putting a vertex in each truncated tetrahedron of $N_n$ and joining these vertices with an edge whenever they have a face in common. Since each truncated tetrahedron is glued to another or to itself along its four hexagonal faces, the resulting dual graph is a random 4-regular graph.

In this article, we will consider manifolds with simple dual graph. These will be denoted by $M_n$. The reason why this doesn't affect our results is because any property P that holds a.a.s. for $N_n$ (i.e, $\mathbb{P}[ N_n \ \textrm{has} \ P] \rightarrow 1$ as $n\rightarrow \infty$), also does for the manifold conditioned on not having loops or multiple edges in its dual graph \cite{Moments}.

Several geometric and topological properties of these manifolds $M_n$ are proven in \cite{Bram_Jean}. The most important one for our purpose is the following: 
\begin{theorem}[\cite{Bram_Jean}, Theorem 2.1] \label{hyperbolic}
    \[\lim_{n\rightarrow \infty} \mathbb{P}[M_n \ \textrm{carries a hyperbolic metric with totally geodesic boundary}] = 1.\]
\end{theorem}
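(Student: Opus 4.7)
The strategy is to realize $M_n$ as a Dehn filling of the related cusped hyperbolic 3-manifold $Y_n$ and then apply an effective form of Thurston's hyperbolic Dehn filling theorem, using random graph estimates to control the filling slopes.

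First, I would construct $Y_n$ directly from the random gluing data. Replace each truncated tetrahedron by a copy of the regular ideal right-angled hyperbolic octahedron (a rigid totally geodesic polyhedron in $\mathbb{H}^3$), and perform the face-pairings prescribed by the random $4$-regular dual graph. Since the octahedra are right-angled and rigid, and the face-pairings are isometries of their totally geodesic triangular faces, $Y_n$ automatically inherits a complete hyperbolic metric of finite volume with totally geodesic boundary and with torus cusps in bijection with the interior edge cycles of the random triangulation. At the same time, $M_n$ is the topological Dehn filling of $Y_n$ in which each cusp is filled along the slope that reinstates the truncated tetrahedra and closes up the corresponding edge cycle into a compact interior edge.

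The second step is to invoke an effective version of Thurston's hyperbolic Dehn filling theorem — for instance the $6$-theorem of Agol and Lackenby — to transfer the hyperbolic structure from $Y_n$ to $M_n$. This theorem gives a hyperbolic manifold (with the same totally geodesic boundary, since the boundary is untouched by the fillings) provided that every filling slope has normalized length exceeding $6$ on its cusp cross-section. So the task reduces to verifying this slope-length condition with probability tending to $1$.

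The main obstacle is this last probabilistic input. Using Futer-Purcell-Schleimer type estimates, the length of the filling slope on a given cusp can be bounded below in terms of the combinatorial length of the corresponding edge cycle in the dual graph and the local geometry of the adjacent octahedra. The $M_n$ are conditioned to have simple dual graphs, so edge cycles have length at least $3$; combined with the Poisson-style first and second moment estimates for the number of short cycles in random $4$-regular graphs (Corollary~\ref{graph}, from McKay-Wormald-Wysocka), one can show that a.a.s.\ every slope length exceeds the $6$-theorem threshold, and conclude that $M_n$ carries a hyperbolic metric with totally geodesic boundary a.a.s. The delicate part of the argument is making the cycle-length to slope-length comparison quantitative enough: short cycles do occur with positive limiting probability, so one must check that even short edge cycles yield slopes whose lengths grow fast enough to cross the $6$ threshold once the cusp geometry of $Y_n$ is taken into account.
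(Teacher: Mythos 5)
This theorem is not proved in the present paper: it is imported verbatim from \cite{Bram_Jean}, and the text only records the strategy used there (build the octahedral manifold $Y_n$, identify $M_n$ with a Dehn filling of $Y_n$, control the filling). Your proposal reconstructs exactly that skeleton, so in outline it follows the same route; the trouble lies in the two steps you leave ``to be verified''.

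The first inaccuracy is that the cusps of $Y_n$ are not torus cusps: a tetrahedron edge is an arc with endpoints on the totally geodesic boundary, so each edge class gives an \emph{annular} cusp. To apply a torus-cusp filling theorem (the $6$-theorem or \cite{Futer_Purcell}) one must double along the boundary — this is why $DK_n$ and $DM_n$ appear in Section \ref{systoleforMn} — and afterwards recover the totally geodesic boundary of the filled manifold via the doubling symmetry and Mostow rigidity; it is not automatic that the filled hyperbolic metric keeps the boundary geodesic just because ``the boundary is untouched''. The more serious gap is the slope-length condition itself. A cusp coming from an edge cycle of combinatorial length $k$ carries, on the standard maximal cusp system of the octahedral structure, a filling slope of length of order $\sqrt{2}\,k$; for the shortest admissible cycles ($k=3,4$, which simplicity of the dual graph does not exclude) this falls below the threshold $6$, so the $6$-theorem does not apply to those cusps as is. Moreover such short edge cycles cannot be ruled out probabilistically: they correspond to the parabolic word classes represented by $S^{k}$, which by Theorem \ref{cyclesZn} are asymptotically Poisson with constant mean $\tfrac{1}{2k}$, hence present with probability bounded away from zero. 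Consequently no moment or girth estimate in the spirit of Corollary \ref{graph} (which in any case counts cycles of the dual $4$-regular graph, a different object from edge cycles) can show that a.a.s.\ every slope exceeds $6$; the ``delicate part'' you defer is precisely the point where your plan breaks down. The actual argument needs a separate mechanism for these short cusps: in \cite{Bram_Jean} (and in the compactification recalled from \cite{me} in Section \ref{systoleforMn}) the small cusps are filled by deforming the polyhedral structure directly via Andreev's theorem, and only the medium and large cusps, whose combinatorial length is at least $\tfrac{1}{8}\log_3(n)$ and whose slopes are therefore long, are treated by a $6$-theorem/Futer--Purcell--Schleimer type filling argument. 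Without that extra ingredient (or a substitute, e.g.\ enlarging the cusp neighbourhoods around short edge cycles using their a.a.s.\ isolation), the proposal does not yield the theorem.
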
 

The proof of hyperbolicity passes through the construction of another model of hyperbolic manifolds made of a gluing of ideal right-angled hyperbolic regular octahedra, denoted by $Y_n$. It is seen later in \cite{Bram_Jean}, that the $M_n$ are homeomorphic to the Dehn filling of these $Y_n$. 

The latter play an important role in the study of the geometry of $M_n$. Indeed, since their hyperbolic structure is more understood, a general strategy to prove properties of $M_n$ is to prove them for $Y_n$ and then check that they hold after the compactification process. This was the line of argument followed in \cite{me} to study the length spectrum of $M_n$  -the (multi-)set of lengths of all its closed geodesics-, and the one we will use to study the systole.

\subsection{Curves and cycles}

In \cite[Section 3.2, 3.3]{me} we explain in detail how we extract information about the closed geodesics in $Y_n$ by looking at the dual graph. 

In summary, the complex $Y_n$ made of octahedra deformation retracts into its dual graph $G_{Y_n}$, which is a random 4-regular graph. This, enriched with some extra information on the edges regarding the orientation-reversing gluings of every pair of faces, encodes the combinatorics of the complex.

The closed curves $\gamma$ in $Y_n$ can be homotoped to closed paths in the dual graph. As the expected number of non-simple closed paths of bounded length in $G_{Y_n}$ goes asymptotically to 0 as $n\rightarrow\infty$ \cite{Moments}, these $\gamma$ will be homotoped to simple closed paths, namely cycles. 

\begin{figure}[H]
    \centering
    \includegraphics[scale=0.3]{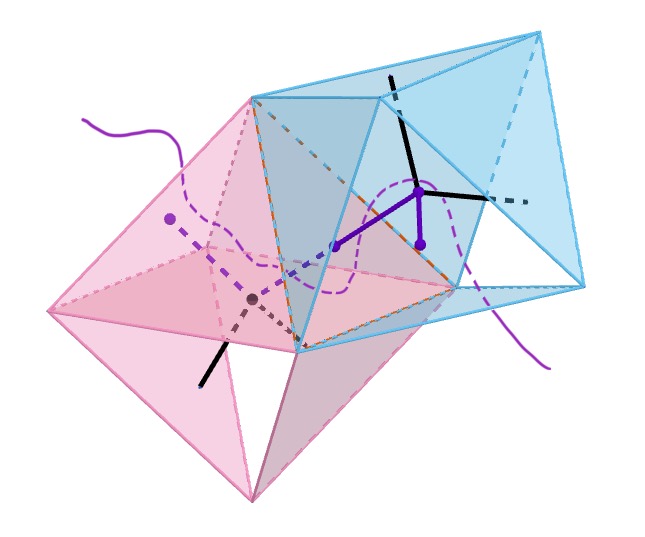}
    \caption{Homotopy of a curve into the dual graph.}
    \label{Homotopy_path}
\end{figure}

These cycles, in turn, can be described by (equivalence classes of) words $w$. These are products of the following elements in PSL(2, $\mathbb{Z}[i]$) $<$ PSL(2, $\mathbb{C}$):
\begin{equation} \label{matrices}
    S = \begin{pmatrix} 1 & 1 \\ 0 & 1 \end{pmatrix} \quad \quad R = \begin{pmatrix} -1 & i \\ i-1 & i \end{pmatrix} \quad \quad L = \begin{pmatrix} i & i \\ i+1 & 1 \end{pmatrix} \quad \quad \theta = \begin{pmatrix} 0 & i \\ i & 1 \end{pmatrix},
\end{equation}
%\[ S = \begin{pmatrix} 1 & 1 \\ 0 & 1 \end{pmatrix} \quad \quad R = \begin{pmatrix} -1 & i \\ i-1 & i \end{pmatrix} \quad \quad L = \begin{pmatrix} i & i \\ i+1 & 1 \end{pmatrix} \quad \quad \theta = \begin{pmatrix} 0 & i \\ i & 1 \end{pmatrix}, \]
describing the isometry realised by the mapping of each incoming and exiting face of the octahedra the curve traverses (see \cite{me} for more details). They have the form: 
\[w = w_{1} \Theta_1 \cdot w_{2} \Theta_2 \cdot \ldots \cdot w_{\abs{w}} \Theta_{\abs{w}},\]
where $w_{i} \in \{ S, R, L \}$ and $\Theta_i \in \{Id, \theta , \theta^2\}$ for $i=1, \ldots, \abs{w}$, and $\abs{w}$ is the length of the word, being the number of elements $w_i\Theta_i$ in $w$. Since we're working with manifolds $M_n$ with simple dual graph, all words that we consider will be of combinatorial length $|w|\geq3$.

Observe also that a curve can be described by different words, depending on the starting point, the direction and the orientation of the gluings of the octahedra. Thus, we define a notion of equivalence class of words in terms of these three parameters (a complete description can be found in \cite{me}). We will denote by W the set of all words $w$ with $|w|\geq 3$, and by $\mathcal{W}$ the set of all their equivalence classes $[w]$. 

Then, from these words, one can recover the length of the corresponding geodesics in the homotopy class of the curves $\gamma$ \cite[Chapter 5]{GeoTopoThurston}. Indeed, the relation between these is given by: 
\begin{equation} \label{l}
    l_{\gamma}(w) = 2\mathrm{Re}\bigg[\arccosh \bigg(\frac{\textrm{trace}([w])}{2}\bigg)\bigg].
\end{equation}

\subsection{The expected value of systole} \label{defW}

%The systole is the length of the shortest closed geodesic in a hyperbolic manifold. Hence, if the systole of $M_n$ is 
By definition of systole, if $M_n$ has systole $l>0$, it means that there is at least one closed geodesic of length $l>0$ in $M_n$, and no other closed geodesic smaller than this. Hence, in order to have insight on its expected value, we would need to know the number of geodesics of each possible length.

Using the relation (\ref{l}) above, on can try first to translate this counting to the counting of cycles in the dual graph $G_{Y_n}$ corresponding to each class of words $[w]$, as it was done for the study of the length spectrum in \cite{me}. For that, we define, in the probability space $(\Omega_n, \mathbb{P}_n)$ associated to the model $Y_n$, the random variable: 
%As it was done for the length spectrum in \cite{me}, to study the systole in Mn, we can look at its correspoding path in G_{Y_n}. By counting the numbr 
%Then, using relation (ref{}) above, we can obtain information about its length. 

\[Z_{n, [w]} : \Omega_n \rightarrow \N, \quad n\in \N, \ [w]\in \mathcal{W} \coloneqq W /\sim,\]
as
\[Z_{n, [w]} (\omega) \coloneqq  \#\{\textrm{cycles $\gamma$ on $G_{Y_n}$ : $\gamma$ is described by $[w]$}\}. \]

We proved in \cite{me} the following about the asymptotic distribution of these random variables.

\begin{theorem}[\cite{me}, Theorem 3.3] \label{cyclesZn}
Consider a finite set $\mathcal{S}$ of equivalence classes of words in W. Then, as $n \rightarrow \infty$,
\[\Z \rightarrow Z_{[w]} \quad \textrm{in distribution for all} \ [w]\in \mathcal{S},\]
where:
\begin{itemize}
    \item $Z_{[w]}: \N \rightarrow \N$ is a Poisson distributed random variable with mean $\lambda_{[w]}= \frac{\abs{[w]}}{3^{\abs{w}}2\abs{w}}$ for all $[w]\in \mathcal{S}$.
    \item The random variables $Z_{[w]}$ and $Z_{[w']}$ are independent for all $[w],[w'] \in \mathcal{S}$ with $[w] \neq [w']$.
\end{itemize}
\end{theorem}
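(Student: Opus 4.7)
The plan is to establish the joint Poisson convergence by the method of factorial moments: it suffices to show that, for every finite collection of distinct classes $[w_1], \ldots, [w_m] \in \mathcal{S}$ and every $(k_1, \ldots, k_m) \in \N^m$,
\[
\E\biggl[\prod_{j=1}^m (Z_{n,[w_j]})_{k_j}\biggr] \xrightarrow[n\to\infty]{} \prod_{j=1}^m \lambda_{[w_j]}^{k_j},
\]
where $(X)_k = X(X-1)\cdots(X-k+1)$. The graph $G_{Y_n}$ can be sampled via the configuration model on $n$ vertices of degree $4$, enriched by an independent uniform choice of a twist $\Theta \in \{Id, \theta, \theta^2\}$ at each of the $2n$ face-gluings, and this set-up is perfectly adapted to such moment calculations.

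First I would compute $\E[Z_{n,[w]}]$ for a single class by writing the random variable as a sum of indicators, one for each candidate \emph{placement} of a cycle described by some representative word $w \in [w]$. A placement consists of an ordered tuple of $\abs{w}$ distinct vertices of $G_{Y_n}$, a prescription of which half-edges are entered and exited at each vertex (specifying the traversed faces), and a twist assignment at each of the $\abs{w}$ gluings. The number of such ordered vertex-tuples is asymptotic to $n^{\abs{w}}$; the probability under the uniform pairing that the $\abs{w}$ prescribed half-edge identifications all occur is asymptotic to $(4n)^{-\abs{w}}$ times a bounded combinatorial factor from the entry/exit choices; and the probability that all twists match the word is exactly $3^{-\abs{w}}$. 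Dividing by the symmetry factor $2\abs{w}$ (the $\abs{w}$ cyclic shifts together with the two orientations of the cycle) and multiplying by the class size $\abs{[w]}$, the combinatorics cancels to yield $\E[Z_{n,[w]}] \to \frac{\abs{[w]}}{2\abs{w}\,3^{\abs{w}}} = \lambda_{[w]}$.

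Next, for the joint factorial moments I would expand $\prod_j (Z_{n,[w_j]})_{k_j}$ as a sum over $(k_1+\cdots+k_m)$-tuples of pairwise distinct placements, with $k_j$ placements for each class $[w_j]$. The dominant contribution comes from families of vertex-disjoint placements: their count factors asymptotically into the product of the individual counts, the pairing probability factors because the required half-edge matchings are on disjoint half-edges, and the twist contributions are genuinely independent. This yields the target product $\prod_j \lambda_{[w_j]}^{k_j}$ and simultaneously delivers both the marginal Poisson behaviour and the asymptotic independence across distinct equivalence classes.

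The main technical obstacle, and the heart of the argument, is bounding the contribution of non-disjoint placements, i.e.\ families sharing at least one vertex or half-edge. Since the combinatorial length $\abs{w}$ is uniformly bounded over the finite set $\mathcal{S}$, every identification of vertices across two placements removes at least one factor of $n$ from the placement count while saving only a bounded number of constraints in the configuration-model probability, so each collision pattern is of lower order. Summing over the finitely many collision patterns and using the finiteness of $\mathcal{S}$ shows that the non-disjoint contribution vanishes as $n \to \infty$. The whole scheme is entirely parallel to the classical short-cycle count computations of Bollobás and Wormald for uniform random regular graphs, adapted here to incorporate the independent twist decoration which is precisely what records the class $[w]$ of the cycle.
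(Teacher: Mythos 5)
This theorem is not proved in the present paper at all—it is imported verbatim from \cite{me} (Theorem 3.3)—and the argument there is precisely the factorial-moment computation in the twist-decorated configuration model that you outline (expected counts of placements, dominance of vertex-disjoint families, lower-order collision terms, in the style of Bollob\'as--Wormald short-cycle counts), so your proposal follows essentially the same route as the source. The one point your sketch glosses over is that $G_{Y_n}$ is conditioned to be simple (no loops or bigons); this is handled by including the $1$- and $2$-cycle counts in the joint Poisson convergence and conditioning on their vanishing, an event of asymptotically positive probability, so the conditional laws of the $Z_{n,[w]}$ for $\lvert w\rvert\geq 3$ still converge to the same independent Poisson limits.
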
 

%let $C_{l}(Y_n)$ be the number of primitive closed geodesics of length $\in [0, l]$ on $Y_n$. Then,
On the other hand, given $l>0$, let:
\[\mathcal{W}_{l} = \bigg\{[w]\in \mathcal{W} : \abs{w} > 2, \ \abs{tr([w])} > 2 \ \textrm{and} \ 2\mathrm{Re}\bigg[\arccosh \bigg(\frac{tr([w])}{2}\bigg)\bigg] \in [0,l]\bigg\}.\]

We can now write down an expression for the expected value of the systole of $Y_n$ in terms of these random variables as follows:  
\begin{align} \label{sysZn}
    \E[\text{sys}(Y_n)] &= \sum_{i= 1}^{\infty} \mathbb{P}\bigg[\begin{array}{c}
    \nexists \ \text{geodesics of length} \ l<l_i, \ \text{and} \\
    \exists \ \text{at least one geodesic of length } l_i \ \text{in} \ Y_n
    \end{array} \bigg] \ l_i \\ 
    &= \sum_{i= 1}^{\infty} \mathbb{P}\bigg[\begin{array}{c}
    \Z = 0 \ \text{for all} \ [w]\in \mathcal{W}_{l_{i-1}}, \ \text{and} \\
   \Z > 0 \ \text{for some} \ [w]\in \mathcal{W}_{l_i} \setminus \mathcal{W}_{l_{i-1}} 
    \end{array} \bigg] \ l_i,
\end{align}
where the sequence $\{l_i\}_{i\geq1}$ is the ordered set of all possible translation lengths coming from (classes of) words $[w] \in \mathcal{W}$, obtained using (\ref{l}), and $l_0 = 0$.

%Despite this doesn't describe yes the expected value of the systole of $M_n$, this is convenient for the following. 
Thanks to Theorem \ref{cyclesZn}, we can compute the point-wise limits of these probabilities. Indeed, let $A_{n}^{i}$ denote the latter event, that is, for all $i\geq 1$,
\[A_{n}^{i} = \{\Z = 0 \ \text{for all} \ [w]\in \mathcal{W}_{l_{i-1}}, \ \text{and} \ \Z > 0 \ \text{for some} \ [w]\in \mathcal{W}_{l_i} \setminus \mathcal{W}_{l_{i-1}}\}.\] We have: 

\begin{prop} \label{limitZn}
Let $\{l_i\}_{i\geq1}$ be the ordered set of all possible translation lengths coming from (classes of) words $[w] \in \mathcal{W}$, and $l_0 = 0$. Then, for every $i\geq 1$, 
%Let $0<l_i<l_{i+1}$ for all $i\geq 1$.
\begin{equation*}
    \lim_{n\rightarrow\infty} \mathbb{P}\big[ A_{n}^{i} \big] = \bigg( \prod_{[w]\in \mathcal{W}_{l_{i-1}}} \exp\bigg(\frac{\abs{[w]}}{2\abs{w}3^{\abs{w}}} \bigg)  \bigg) \bigg(1 - \prod_{[w]\in \mathcal{W}_{l_i} \setminus \mathcal{W}_{l_{i-1}}} \exp\bigg(\frac{\abs{[w]}}{2\abs{w}3^{\abs{w}}} \bigg)  \bigg).
\end{equation*}
\end{prop}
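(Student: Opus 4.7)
The plan is to reduce the event $A_n^i$ to a difference of two ``all-zero'' events and then invoke Theorem \ref{cyclesZn} to obtain the convergence of the relevant finite-dimensional probabilities.

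First, I would note that the set $\mathcal{W}_{l_i}$ is finite for every $i \geq 1$, since Theorem \ref{cyclesZn} requires a finite collection of equivalence classes. Finiteness follows because the bound on the translation length forces $|\mathrm{tr}([w])|$ to lie in a bounded region of $\mathbb{C}$, leaving only finitely many Gaussian-integer traces, and only finitely many classes $[w]\in \mathcal{W}$ realise each such trace; this discreteness is precisely what makes the sequence $\{l_i\}$ well-defined as an ordered sequence.

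Next, since $\mathcal{W}_{l_{i-1}} \subseteq \mathcal{W}_{l_i}$, set
$$B_n := \bigcap_{[w]\in \mathcal{W}_{l_{i-1}}}\{\Z = 0\}, \qquad D_n := \bigcap_{[w]\in \mathcal{W}_{l_i}}\{\Z = 0\},$$
so that $D_n \subseteq B_n$ and $A_n^i = B_n \setminus D_n$. Therefore $\mathbb{P}[A_n^i] = \mathbb{P}[B_n] - \mathbb{P}[D_n]$. Applying Theorem \ref{cyclesZn} with $\mathcal{S} = \mathcal{W}_{l_i}$ gives joint convergence in distribution of the $\mathbb{N}$-valued vector $(\Z)_{[w]\in \mathcal{S}}$ to a vector of independent Poisson variables with means $\lambda_{[w]} = \frac{\abs{[w]}}{2\abs{w}3^{\abs{w}}}$. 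Since the variables are integer-valued, convergence in distribution entails convergence of the probability of any cylinder event that fixes finitely many coordinates to a specified value; combined with the independence of the Poisson limits, this yields
$$\mathbb{P}[B_n] \longrightarrow \prod_{[w]\in \mathcal{W}_{l_{i-1}}}\exp(-\lambda_{[w]}), \qquad \mathbb{P}[D_n] \longrightarrow \prod_{[w]\in \mathcal{W}_{l_i}}\exp(-\lambda_{[w]}).$$
Factoring the common product over $\mathcal{W}_{l_{i-1}}$ out of the difference gives exactly the claimed formula.

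The main point to verify is therefore the finiteness of $\mathcal{W}_{l_i}$; everything else is a direct consequence of joint Poisson convergence. Let me emphasise that this proposition alone does not yet suffice for Theorem \ref{sysMn}: the exchange between the limit in $n$ and the infinite sum in $i$ in (\ref{sysZn}) still requires a dominated-convergence argument, which is the task of the next section of the paper.
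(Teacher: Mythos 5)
Your proof takes essentially the same route as the paper's: establish finiteness of the relevant word sets, apply Theorem \ref{cyclesZn} to get joint convergence to independent Poissons, and read off $\lim \mathbb{P}[A_n^i]$. The decomposition $A_n^i = B_n \setminus D_n$ with $\mathbb{P}[A_n^i] = \mathbb{P}[B_n] - \mathbb{P}[D_n]$ is a slightly more explicit way to package the step the paper performs in one line, and the remark that convergence in distribution of $\mathbb{N}$-valued vectors gives convergence of cylinder probabilities is exactly the justification implicitly used.

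The one genuine gap is in your finiteness sketch. You argue: bounded translation length $\Rightarrow$ bounded $|\mathrm{tr}([w])|$ $\Rightarrow$ finitely many Gaussian-integer traces $\Rightarrow$ finitely many classes, asserting that ``only finitely many classes $[w]\in\mathcal{W}$ realise each such trace.'' That last assertion is not self-evident and is in fact the crux: nothing in your argument rules out words of arbitrarily large combinatorial length $|w|$ sharing a fixed trace. (The phrase about ``discreteness'' making $\{l_i\}$ well-defined does not help --- discreteness of the length spectrum is compatible with infinitely many classes realising a given length.) The paper instead cites \cite[Proposition 3.4]{me}, which gives a coarse two-sided comparison between $|w|$ and $l_\gamma(w)$; the direction needed here is that a bound on the translation length forces a bound on the combinatorial length $|w|$, and since there are at most $9^k$ words with $|w|=k$, the set $\mathcal{W}_{l_i}$ is finite. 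You would either need to cite that result or supply the lower bound on $l_\gamma(w)$ in terms of $|w|$; the trace-counting shortcut as written does not close the gap.
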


\begin{proof}
It was proven in \cite[Proposition 3.4]{me} that the number of classes of words $[w]\in \mathcal{W}_{l_i} \setminus \mathcal{W}_{l_{i-1}}$, for any $i\geq 1$, is finite. Hence, we can use Theorem \ref{cyclesZn} which, together with the independence of the random variables, gives:
\begin{align*} 
    \lim_{n\rightarrow\infty} \mathbb{P}\big[A_{n}^{i}\big] &= \bigg( \prod_{[w]\in \mathcal{W}_{l_{i-1}}} \mathbb{P}[Z_{[w]}=0]  \bigg) \bigg(1 - \prod_{[w]\in \mathcal{W}_{l_i} \setminus \mathcal{W}_{l_{i-1}}} \mathbb{P}[Z_{[w]}=0] \bigg) \\ 
    &= \bigg( \prod_{[w]\in \mathcal{W}_{l_{i-1}}} \exp\bigg(\frac{\abs{[w]}}{2\abs{w}3^{\abs{w}}} \bigg)  \bigg) \bigg(1 - \prod_{[w]\in \mathcal{W}_{l_i} \setminus \mathcal{W}_{l_{i-1}}} \exp\bigg(\frac{\abs{[w]}}{2\abs{w}3^{\abs{w}}} \bigg)  \bigg).  
\end{align*}
\end{proof}
Therefore, up to showing that we can switch the limit and the infinite sum in (\ref{sysZn}), this would allow us to obtain a precise computable expression of the expected value of the systole for the manifolds $Y_n$. Once we have this, the only thing remaining would be to see that this value doesn't degenerate for the compactified manifolds $M_n$.

%This would allow us to obtain a precise computable expression of this expected value, up to showing two aspects: first the infinite sum converges, and second that this holds 

\section{Convergence} \label{convergence}

Let us show that we can indeed swap the limit and the infinite sum appearing in the expression (\ref{sysZn}) of the expected systole, so that we can apply Proposition \ref{limitZn}, and obtain:

\begin{equation} \label{sysYn}
    \lim_{n\rightarrow \infty} \E[\text{sys}(Y_n)] = \sum_{i= 1}^{\infty} \bigg( \prod_{\scriptscriptstyle[w]\in \mathcal{W}_{l_{i-1}}} \exp \bigg(\frac{\abs{[w]}}{2\abs{w}3^{\abs{w}}} \bigg)  \bigg) \bigg(1 - \prod_{\scriptscriptstyle [w]\in \mathcal{W}_{l_i} \setminus \mathcal{W}_{l_{i-1}}} \exp\bigg(\frac{\abs{[w]}}{2\abs{w}3^{\abs{w}}} \bigg)  \bigg)\cdot l_i.
\end{equation}

For this, we will apply the dominated convergence theorem. 
A first naive observation is that: 
\[  \mathbb{P}\big[A_{n}^{i}\big] = \mathbb{P}\bigg[\begin{array}{c} \Z = 0 \ \text{for all} \ [w]\in \mathcal{W}_{l_{i-1}}, \ \text{and} \\ \Z > 0 \ \text{for some} \ [w]\in \mathcal{W}_{l_i} \setminus \mathcal{W}_{l_{i-1}} \ \end{array} \bigg] \leq \mathbb{P}[\Z = 0 \ \text{for all} \ [w]\in \mathcal{W}_{l_{i-1}}]. \]

We look then for a uniform upper bound for the latter. By definition of the random variable $\Z$, this tells us that there are no words whose length of the corresponding geodesic is in $[0,l_{i-1}]$. This condition implies, in turn, a lower-bound on the combinatorial length of the word. 
%Recall that the words $w$ were products of matrices in PSL$(2, \mathbb{Z}[i])$, so they are themselves elements in this group. 
%Indeed, using the sub-multiplicity property of the operator norm for these matrices, one can deduce that all these words in the dual graph need, then, to have combinatorial length larger than $ \left \lfloor \frac{l_{i-1}}{2\log(\frac{3}{2}(1+\sqrt{2}))} \right\rfloor$.
Indeed, if $w=M_1 \cdots M_k$, where $M_i \in \{S, R, L, S\theta, R\theta, L\theta, S\theta^2, R\theta^2, L\theta^2\}$, using the sub-multiplicity property of the operator norm, we have:
%(the computation of this can be found in the appendix). 
\begin{align*}
    \norm{w}_{\infty} = \norm{M_1 \cdots M_k}_{\infty} \leq \norm{M_1}_{\infty} \cdots \norm{M_k}_{\infty} \leq (\sup_{1\leq i \leq k} \norm{M_i}_{\infty})^k = (1+\sqrt{2})^k.
\end{align*}
In particular, the absolute value of every coefficient of $w$ is bounded by $(1+\sqrt{2})^k$. Hence, 
\[ \abs{\text{tr}(w)} \leq 2(1+\sqrt{2})^k.\]
Since we have that the length of the geodesic corresponding to $w$ is expressed as: 
\[ l_{\gamma}(w) = 2\mathrm{Re}\bigg[\arccosh \bigg(\frac{\textrm{trace}([w])}{2}\bigg)\bigg] = 2 \log\bigg(\left\lvert\frac{\text{tr}(w)}{2} + \sqrt{\Big(\frac{\text{tr}(w)}{2}\Big)^2-1}\right\rvert \bigg),\]
using the inequality above, we obtain that: 
\[ l_{\gamma}(w) \leq  k \cdot 2\log\Big(\frac{3}{2}(1+\sqrt{2})\Big). \]
Hence, if we have that $l_{\gamma}(w) > l_{i-1}$ for all $w\in W$, we can bound the probability above by: 
%\[\mathbb{P}\bigg[ G_{Y_n} \ \text{contains no (non-contractible) cycles of lengths} \ \in \{3,\ldots, \lfloor \frac{l_{i-1}}{2\ln(2\sqrt{2})} \rfloor\} \]
%\left\lfloor\dfrac{1}{2}\right\rfloor
\[\mathbb{P}\bigg[ G_{Y_n} \ \text{contains no essential cycles of length } \ \leq \left \lfloor \frac{l_{i-1}}{2\log(\frac{3}{2}(1+\sqrt{2}))} \right\rfloor \bigg],\]
where an essential cycle in $G_{Y_n}$ refers to a cycle representing a curve in the manifold that is non-homotopic to a cusp or a point. As a note, the word essential often refers to curves that are also non-homotopic to a boundary component, but in our case we allow this condition.

If we denote by $\tau_{n}$ the minimum length of an essential cycle in $G_{Y_n}$, this is equivalent to:
\[\mathbb{P}\bigg[ \tau_{n} > \left \lfloor \frac{l_{i-1}}{2\log(\frac{3}{2}(1+\sqrt{2}))} \right\rfloor \bigg].\]
Now, to get the uniform bound on this, we use a version of the following result by McKay-Wormald-Wysocka:
\begin{corol}[\cite{Mckay-Wormald}, Corollary 1] \label{graph}
For $(d-1)^{2g-1} = o(n)$, the probability that a random $d$-regular graph has girth greater than $g\geq 3$ is 
\[\exp\bigg(- \sum_{r=3}^{g} \frac{(d-1)^r}{2r} + o(1)\bigg), \]
as $n\rightarrow \infty$.
\end{corol}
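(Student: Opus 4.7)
The plan is to use the standard configuration (pairing) model for random $d$-regular graphs and prove joint Poisson convergence of the short cycle counts, which immediately yields the closed form for the girth probability. Let $X_r$ denote the number of cycles of length $r$ in the random graph. The statement $\{\text{girth} > g\}$ is exactly $\{X_3 = 0, X_4 = 0, \ldots, X_g = 0\}$, so once we show that $(X_3, \ldots, X_g)$ converges jointly in distribution to a vector of independent Poisson variables with means $\lambda_r = (d-1)^r/(2r)$, the conclusion follows by taking the product $\prod_{r=3}^{g} e^{-\lambda_r}$ and absorbing lower order contributions into the $o(1)$.

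First, I would realise the random $d$-regular graph via the pairing model: take $dn$ half-edges grouped into $n$ buckets of size $d$, pair them uniformly at random, and condition on the resulting multigraph being simple. It is classical (Bollob\'as) that the probability of simplicity tends to a positive constant $e^{-(d^2-1)/4}$, so it suffices to prove the Poisson convergence under the unconditioned pairing measure; the conditional probabilities then differ only by $o(1)$ because the event $\{\text{girth} > g\}$ already forbids loops and double edges (these are cycles of length $1$ and $2$, and one checks that their counts converge jointly to independent Poissons as well, allowing the conditioning to be absorbed).

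Next, the core computation. For each fixed $r$, the expected number of cycles of length $r$ in the pairing model is
\[ \E[X_r] = \frac{(n)_r\, d^r (d-1)^r}{2r} \cdot \frac{1}{(dn-1)(dn-3)\cdots(dn-2r+1)} \longrightarrow \frac{(d-1)^r}{2r}, \]
where the combinatorial factor counts ordered $r$-tuples of distinct vertices, the choice of half-edges realising the cycle, and the usual $2r$ for rotations and reflections. The method of moments then requires showing that for any fixed nonnegative integers $k_3, \ldots, k_g$,
\[ \E\!\left[\prod_{r=3}^{g} (X_r)_{k_r}\right] \longrightarrow \prod_{r=3}^{g} \lambda_r^{k_r}. \]
This is done by expanding the falling factorials as sums over ordered tuples of distinct cycles and splitting according to whether the chosen cycles share vertices or edges. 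The disjoint configurations contribute $\prod \lambda_r^{k_r}$ asymptotically (same estimate as above iterated), while the overlapping ones contribute a total of $O((d-1)^{2g-1}/n)$, and this is precisely where the hypothesis $(d-1)^{2g-1} = o(n)$ is essential: it forces overlap terms to vanish.

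By the method of moments for Poisson limits (all factorial moments determine the Poisson), joint convergence follows, and then
\[ \mathbb{P}[\text{girth} > g] = \mathbb{P}[X_3 = \cdots = X_g = 0] \longrightarrow \prod_{r=3}^{g} \exp\!\left(-\frac{(d-1)^r}{2r}\right), \]
which rearranges into the stated formula. The main technical obstacle, and the only place where the hypothesis on $g$ is used nontrivially, is controlling the overlap contributions in the factorial moment computation uniformly in $r \leq g$; everything else is a careful bookkeeping of the pairing model asymptotics together with the standard transfer from the pairing model to the uniform simple $d$-regular graph distribution.
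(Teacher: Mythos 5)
Your plan --- pairing model, joint Poisson convergence of the cycle counts via factorial moments, then multiply the $e^{-\lambda_r}$ --- is the classical route and it does prove the statement for any \emph{fixed} $g\ge 3$. The gap appears when $g$ grows with $n$, which is exactly what the hypothesis $(d-1)^{2g-1}=o(n)$ is there to allow, and which is how the corollary is actually used in this paper (with $g$ of order $\log\log n$ in Lemma \ref{badset3}, and over a range of growing values in Section \ref{convergence}). In that regime the aggregate mean $\mu_g=\sum_{r=3}^g (d-1)^r/(2r)$ diverges, so $\mathbb{P}[\text{girth}>g]\to 0$ and the claim is a \emph{relative} asymptotic: the probability equals $e^{-\mu_g}(1+o(1))$. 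To extract a quantity tending to zero via inclusion--exclusion on factorial moments, you would need $\E\big[\binom{X_3+\cdots+X_g}{k}\big]=\frac{\mu_g^k}{k!}(1+o(1))$ with a \emph{uniform} multiplicative error for $k$ ranging up to a scale comparable to $\mu_g$ itself. Placing $k$ cycles compounds the overlap errors in $k$, and the number of random variables $X_3,\ldots,X_g$ is itself unbounded; the single-configuration overlap bound $O\big((d-1)^{2g-1}/n\big)$ you write down does not propagate to the required uniformity. This is a genuine gap, not a bookkeeping issue.

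Note also that the paper does not prove this corollary; it cites it from McKay--Wormald--Wysocka, who derive it from a sharper enumeration (their Theorem 1) obtained by the \emph{switching method}, as the surrounding text describes. There one estimates, by averaging over forward and backward switchings, the ratio between the number of $d$-regular pairings with prescribed short-cycle counts $m_1,\ldots,m_t$ and the number whose counts lie below caps $R_1,\ldots,R_t$. Because a switching changes one cycle count at a time by one, the induced recursion yields multiplicative error control that is uniform in the cycle counts and in $g$ up to the stated threshold --- precisely the uniformity your moment argument is missing. So your approach is genuinely different from the source's, and in its present form weaker: it establishes the Poisson limit for bounded girth, but not the relative asymptotic when $g\to\infty$.
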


This follows from a more general result (see \cite[Theorem 1]{Mckay-Wormald}). The result is very close to what we need, but cannot be applied as is. Note that $\tau_{n}$ does not consider cycles that correspond to parabolic elements in the manifold $Y_n$. However, these are included in the result above. Hence, to get a bound for our case, we need to take out a factor corresponding to the possible number of parabolic elements appearing as cycles in the graph. 

One of the steps of the proof of Corollary \ref{graph} is to compute the ratio between the set of $d$-regular graph on $n$ vertices with fixed numbers $m_1, \ldots, m_t$ of cycles of certain lengths $c_1, \ldots, c_t$, and the set of those with at most $R_1, \ldots, R_t$ cycles of these lengths $c_1, \ldots, c_t$, where $m_i \leq R_i$, $i=1, \ldots, t$ and the $R_i$ are growing in $n$. For this, they use the switching method. More precisely, they count the average number of ways of applying a (forward and backward) switching to a graph to get from one set to the other.

By construction of the words $w$ one has that:
\[ \frac{\#\{\text{words $w\in W$ of $|w|= r$ corresponding to parabolic elements}\}}{\#\{\text{words $w\in W$ of $|w|= r$} \}} = \frac{1}{3^r}.\]

Indeed, since there are 9 matrices to choose from at each step, we can create $9^r$ possible words of length $r>0$. On the other hand, there exists only one class of words of length $r$ representing the parabolic elements. 
Indeed, let $w_1$, $w_2$ be two words of length $r > 0$ representing two parabolic elements, and suppose that $w_2 \notin [w_1]$.
Without lost of generality, we can take the word $w_1 = S^{r}$ as a representative of this class. 
Now, since $w_2$ also describes a parabolic element, it fixes an ideal vertex of an octahedron in the octahedral complex. Therefore, it admits as a representative for its class the rth-power of some parabolic letter fixing a vertex, that is, there is a word $w\in [w_2]$ that is of the form: 
\[ w = S^{r}, \quad w = (R\theta)^{r}, \text{ or} \quad w = (L\theta^2)^{r}.\]
Since $w_2 \notin [w_1]$, $w \neq S^{r}$. On the other hand, by looking at the relations defining the equivalence class of words explained in \cite{me}, one can check that the other two possibilities are equivalent to the first word. Hence $w\in [w_1]$, which implies that $w_1$ and $w_2$ define the same equivalence class. From the representative $w=S^r$, one can see that the cardinal of the class is $3^r$.

%since classes of words differentiate by trace (up to sign), there exists one class of words representing the parabolic elements, of which $w=S^r$ is a representative. By definition of equivalence class, the cardinal of this class is $3^r$.

Hence, we deduce that the average number of cycles of length $r>0$ in $G_{Y_n}$ corresponding to essential curves is at most $\big(1-\frac{1}{3^r}\big)$ the average number of cycles of length $r$ in the graph. This needs to be taken into account in the proof of \cite[Theorem 1]{Mckay-Wormald} when counting the average ways of applying a backward switching, to make sure that the new created cycles are essential. With that in mind, together with some small changes in notation, the rest of the argument of \cite[Theorem 1]{Mckay-Wormald} follows step by step. We record the statement for our particular case as it will be used later on. 

\begin{corol} \label{graphme}
For $3^{2g-1} = o(n)$, the probability that $G_{Y_n}$ has no essential cycle of combinatorial length smaller or equal than $g\geq 3$ is less or equal than 
\[\exp\bigg(- \sum_{r=3}^{g} \frac{3^r}{2r}\Big(1-\frac{1}{3^r}\Big)+ o(1)\bigg), \]
as $n\rightarrow \infty$.
\end{corol}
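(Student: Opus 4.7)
The plan is to revisit the proof of \cite[Theorem 1]{Mckay-Wormald} with $d=4$ and insert the essential-versus-parabolic correction at the single place where short cycles are counted. The McKay-Wormald argument compares, via the switching method, the numbers of $4$-regular graphs with prescribed cycle-count vectors $(m_3,\dots,m_g)$, obtaining that these vectors converge jointly to independent Poisson random variables with means $(d-1)^r/(2r) = 3^r/(2r)$. Exponentiating the sum of these means gives the girth formula of Corollary~\ref{graph}.

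The key modification is the observation already recorded just before the statement: for every cycle of length $r$ in $G_{Y_n}$, the associated word built from $\{S,R,L\}\times\{\mathrm{Id},\theta,\theta^2\}$ is, conditional on the underlying $4$-regular dual graph, uniformly distributed over the $9^r$ possible words of combinatorial length $r$, because the random cyclic-order-reversing gluings of the hexagonal faces are chosen independently of the face-pairing. The parabolic class, represented by $S^r$, contains exactly $3^r$ words, so the conditional probability that a given length-$r$ cycle is essential equals $1 - 1/3^r$. Since the gluing data on distinct pairs of faces are independent, this essentiality event is independent across short cycles, which are edge-disjoint with probability $1-o(1)$ in the relevant range of lengths.

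Carrying this Bernoulli thinning through the switching argument replaces each Poisson mean $3^r/(2r)$ by $(3^r/(2r))(1-1/3^r)$; all remaining parts of \cite[Theorem 1]{Mckay-Wormald} --- the truncation in $r$, the $o(1)$ error from overlapping short cycles, and the passage from joint factorial moments to a Poisson limit --- transfer verbatim. Consequently the joint distribution of counts of essential cycles of lengths $3,\dots,g$ converges to independent Poissons with means $(3^r/(2r))(1-1/3^r)$, giving
\[ \mathbb{P}\bigl[\,G_{Y_n} \text{ has no essential cycle of length } \leq g\,\bigr] = \exp\!\left( - \sum_{r=3}^{g} \frac{3^r}{2r}\Big(1 - \tfrac{1}{3^r}\Big) + o(1) \right), \]
which in particular implies the claimed inequality.

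The main difficulty I anticipate is verifying cleanly that the two layers of randomness in the model of \cite{Bram_Jean} --- the uniformly random pairing of the $4n$ hexagonal faces, and the independent uniform choice of cyclic-order-reversing gluing on each matched pair --- decouple in such a way that the factor $(1-1/3^r)$ can be applied independently to each new cycle produced by a backward switching. This requires re-examining the step in \cite{Mckay-Wormald} where a backward switching introduces a cycle of length $r$, and confirming that one may first condition on the pairing-level switching and only then average over the gluing labels on the edges of the new cycle, so that no hidden correlation is introduced across the different cycle lengths $r = 3, \dots, g$.
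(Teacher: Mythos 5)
Your proposal follows essentially the same route as the paper: both start from McKay--Wormald's Theorem 1 with $d=4$, observe that exactly a $1/3^r$ fraction of the $9^r$ words of combinatorial length $r$ are parabolic (the class $[S^r]$ with its $3^r$ elements), and then argue that the switching-method proof can be rerun with this Bernoulli thinning inserted at the backward-switching step, replacing each Poisson mean $3^r/(2r)$ by $(3^r/(2r))(1-1/3^r)$. The difficulty you flag at the end --- verifying that the face-pairing randomness and the independent gluing-label randomness decouple so the $(1-1/3^r)$ correction can be applied per created cycle without introducing cross-length correlations --- is exactly the point the paper glosses over with the phrase that the rest of \cite[Theorem~1]{Mckay-Wormald} ``follows step by step,'' so you have correctly located the one place where extra care is needed.
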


We use this to bound the probability above. One observation is that $\left \lfloor \frac{l_{i-1}}{2\log(\frac{3}{2}(1+\sqrt{2}))} \right\rfloor$ is only greater than 3 from some length $l_k$ on. However, the number of terms for which this value is less than 3 is finite. Hence, for these values of $l_i$, we can bound the term (\ref{sysZn}) by:
\begin{align*}
    \mathbb{P}\big[A_{n}^{i}\big]\cdot l_i \leq \mathbb{P}\bigg[ \tau_{n} > \left \lfloor \frac{l_{i-1}}{2\log(\frac{3}{2}(1+\sqrt{2}))} \right\rfloor \bigg]l_k \leq l_k.
\end{align*}

On the other hand, we note that Corollary \ref{graphme} can be directly applied only when $g = \left \lfloor \frac{l_{i-1}}{2\log(\frac{3}{2}(1+\sqrt{2}))} \right\rfloor \leq (\frac{1}{2} + o(1))\log_3(n)$. For those values of $l_i \geq l_{k+1}$, we have: 
\begin{align*}
    \mathbb{P}\big[A_{n}^{i}\big]\cdot l_i &\leq \mathbb{P}\bigg[ \tau_{n} > \left \lfloor \frac{l_{i-1}}{2\log(\frac{3}{2}(1+\sqrt{2}))} \right\rfloor \bigg]\cdot l_i \\ 
    &\leq \exp\bigg(- \sum_{r=3}^{\lfloor \frac{l_{i-1}}{2\log(\frac{3}{2}(1+\sqrt{2}))} \rfloor} \frac{3^r}{2r}\Big(1-\frac{1}{3^r}\Big)+ K\bigg)\cdot l_i \\
    &= \exp\bigg(- \sum_{r=3}^{\left \lfloor \frac{l_{i-1}}{2\log(\frac{3}{2}(1+\sqrt{2}))} \right\rfloor} \frac{3^r+K'}{2r}\bigg)\cdot l_i \\
    &\leq \exp\bigg(-\frac{3^{\left \lfloor \frac{l_{i-1}}{2\log(\frac{3}{2}(1+\sqrt{2}))} \right\rfloor}+K'}{2{\left \lfloor \frac{l_{i-1}}{2\log(\frac{3}{2}(1+\sqrt{2}))} \right\rfloor}}\bigg) \cdot l_i,
\end{align*}
for some $K, K'>0$ independent of $n$.

Now, for all $l_i$ such that the previous condition is no longer satisfied, we use the following.
We refer here to an octahedral hyperbolic manifold a hyperbolic manifold built following the same procedure as $Y_n$, but in which the gluing is deterministic.
\begin{lemma}
    %Let G be a $4$-regular graph on $n$ vertices. Then,
    Let $X_n$ be an octahedral hyperbolic manifold made of $n$ octahedra. Then, its dual graph $G_{X_n}$ always has an essential cycle of combinatorial length  $\leq 4\lceil \log_2(\frac{n+1}{4}) \rceil + 1$.
\end{lemma}
\begin{proof}
We pick any vertex $v$ of $G_{X_n}$, and consider the four vertices neighbours to $v$. 
We argue by contradiction: suppose that there doesn't exist any essential cycle starting at any of these four vertices of length up to $k=4\lceil \log_2(\frac{n+1}{4}) \rceil+1$.
%We consider the four vertices neighbours to $v$, and 

Now, we consider the paths starting at any of these four vertices that are of the form:
\[ w = w_1 \cdots w_{t}, \quad \text{with} \quad w_i\in \{SR\theta, SL\theta^2\}, \ \text{for } i=1,\ldots,t, \  t\in\mathbb{N}.\] 
Note that $SR\theta$ and $SL\theta^2$ are both two-letter words (we don't count $\theta$ or $\theta^2$ as letters) that correspond to essential paths, and so their contatenation also form an essential path.
Hence, all the paths described by the previous words are essential. 

%Now, let $B(v, r)$ be a ball in $G_{X_n}$ of radius $r$ around $v$. 
%Now, if any two different path of this form form a cycle, then 
Since, up to combintorial length $k$, these paths don't form an essential cycle, 
%Since there doesn't exist any essential cycle starting at any of the four neighbours of $v$, of length up to $k=(2+o(1))\log_3(n)$, 
we have that the number of octahedra they go through after $t$ steps is $4\cdot 2^t$, for any $t\leq \frac{k-1}{4}$.
Equivalently, if $B(v, 2t)$ denotes a ball in $G_{X_n}$ of radius $2t\leq \frac{k-1}{2}$ around $v$, the number of vertices in $B(v, 2t)$ forming these paths is $4\cdot 2^{t}$.

However, if we take $t = \frac{k-1}{4}= \lceil \log_2(\frac{n+1}{4}) \rceil$, the previous fact tells us that the number of vertices forming these paths would be:
\[ 4\cdot 2^{\lceil \log_2(\frac{n+1}{4}) \rceil} > n,\] 
where $n$ is the total number of vertices. This gives us a contradiction, implying that there is at least two paths $w$, $w'$ of this form that have the same endpoint. 

Now, if $w$ and $w'$ have the same starting vertex, then their contatenation $w\cdot\bar{w}'$ -where $\bar{w}'$ denotes the backwards word of $w'$- forms a cycle.
Since the backwards word $\bar{w}$ of an essential path $w$ is an essential path, and both $w$ and $w'$ are essential ones, we obtain that $w\cdot\bar{w}'$ is an essential cycle of length at most $ 4\lceil \log_2(\frac{n+1}{4}) \rceil$.
On the other hand, if $w$ and $w'$ have different starting vertex, we know that both paths are connected to $v$. 
Thus, the path given by $w\cdot\bar{w}'\cdot \tilde{w}$, where $\tilde{w}\in \{S, R\theta, L\theta^2\}$ forms a cycle.
Since adding a letter to an essential path gives us an essential path, we conclude that $w\cdot\bar{w}'\cdot \tilde{w}$ forms an essential cycle of length at most $4\lceil \log_2(\frac{n+1}{4}) \rceil + 1$.
\end{proof}

Using this lemma, it is clear that for the $l_i$ such that $g = \left \lfloor \frac{l_{i-1}}{2\log(\frac{3}{2}(1+\sqrt{2}))} \right\rfloor \geq 4\lceil \log_2(\frac{n+1}{4}) \rceil + 1$, 
\[ \mathbb{P}\bigg[ \tau_{n} > \left \lfloor \frac{l_{i-1}}{2\log(\frac{3}{2}(1+\sqrt{2}))} \right\rfloor \bigg] = 0.\]

Finally, for the $l_i$'s such that $(\frac{1}{2} + o(1))\log_3(n) \leq \left \lfloor \frac{l_{i-1}}{2\log(\frac{3}{2}(1+\sqrt{2}))} \right\rfloor \leq 4\lceil \log_2(\frac{n+1}{4}) \rceil + 1$, we observe that the ratio: 
\[ \frac{\left \lfloor \frac{l_{i-1}}{2\log(\frac{3}{2}(1+\sqrt{2}))} \right\rfloor}{(\frac{1}{2} + o(1))\log_3(n)} \leq 13 + o(1) \leq C \quad \Rightarrow \quad \frac{\left \lfloor \frac{l_{i-1}}{2\log(\frac{3}{2}(1+\sqrt{2}))} \right\rfloor}{C} \leq \bigg( \frac{1}{2} + o(1)\bigg)\log_3(n).\]

Therefore, using Corollary \ref{graphme}, we obtain:
\begin{align*}
    \mathbb{P}\big[A_{n}^{i}\big]\cdot l_i &\leq \mathbb{P}\bigg[ \tau_{n} > \left \lfloor \frac{l_{i-1}}{2\log(\frac{3}{2}(1+\sqrt{2}))} \right\rfloor \bigg] l_i \\
    &\leq \mathbb{P}\bigg[ \tau_{n} > \frac{\left \lfloor \frac{l_{i-1}}{2\log(\frac{3}{2}(1+\sqrt{2}))} \right\rfloor}{C} \bigg] l_i\\
    &\leq \exp\bigg(-\frac{3^{\frac{\left \lfloor \frac{l_{i-1}}{2\log(\frac{3}{2}(1+\sqrt{2}))} \right\rfloor}{C}} +K''}{\frac{2}{C}{\left \lfloor \frac{l_{i-1}}{2\log(\frac{3}{2}(1+\sqrt{2}))} \right\rfloor}}\bigg) \cdot l_i,
\end{align*}
for some $C,K''>0$ independent of $n$. 

All in all, we get, for all values of $l_i$, the following upper-bound: %on the probability of $A_{n}^{i} = \{\Z = 0 \ \text{for all} \ [w]\in \mathcal{W}_{l_{i-1}}, \ \text{and} \ \Z > 0 \ \text{for some} \ [w]\in \mathcal{W}_{l_i} \setminus \mathcal{W}_{l_{i-1}}\}$, for $i\geq 1$.
\begin{lemma}  \label{upperbound_An} We have, for all $i\geq 1$:
    \[\mathbb{P}\big[A_{n}^{i}\big] \leq \exp\bigg(-\frac{3^{\frac{\left \lfloor \frac{l_{i-1}}{2\log(\frac{3}{2}(1+\sqrt{2}))} \right\rfloor}{C}} + K}{\frac{2}{C}{\left \lfloor \frac{l_{i-1}}{2\log(\frac{3}{2}(1+\sqrt{2}))} \right\rfloor}}\bigg), \]
for some $C,K>0$ independent of $n$. 
\end{lemma}

%For the upper-bound, we prove the following:
To complete the argument, we need now a lower and an upper bound on the lengths. For that, we have the following lemma: 
\begin{lemma}
    Let $l_k$, $k\geq1$, be the $k^{th}$-entry of $\{l_i\}_{i\geq1}$, the ordered set of all translation lengths coming from (classes of) words $[w] \in \mathcal{W}$. Then, for $k$ large enough, we have that:
    \[ K_1 \cdot \log(k) < l_k < K_2\cdot \log(k+3),\]
    for some $0<K_1 < \frac{1}{2}$ and $K_2\geq2$.
\end{lemma}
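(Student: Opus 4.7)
The plan is to prove the two inequalities independently: the upper bound by exhibiting an explicit one-parameter family of words that produces many distinct short translation lengths, and the lower bound by a trace-counting argument.

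For the upper bound $l_k < K_2 \log(k+3)$, I would work with the family $w_n \coloneqq S^n R$ for $n \geq 2$, each of which lies in $W$ since $|w_n| = n+1 \geq 3$. A direct matrix computation gives $\mathrm{tr}(w_n) = -(n+1)(1-i)$, so $|\mathrm{tr}(w_n)| = (n+1)\sqrt{2}$. Writing the eigenvalues of $w_n$ as $\lambda_n, \lambda_n^{-1}$ with $|\lambda_n| \geq 1$, and using the expansion of $\sqrt{z^2 - 1}$ at $z = \mathrm{tr}(w_n)/2$, one gets $|\lambda_n| = (n+1)\sqrt{2}\,(1+o(1))$, and hence
\[ l_\gamma(w_n) = 2\log|\lambda_n| = 2\log(n+1) + \log 2 + o(1). \]
For $n$ sufficiently large the $l_\gamma(w_n)$ are strictly increasing, hence pairwise distinct, so the number of distinct $l_i$ in $[0, L]$ is at least $c\, e^{L/2}$ for some $c > 0$ and $L$ large. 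Inverting this yields $l_k \leq 2\log k + O(1)$, and any $K_2 > 2$ (for instance $K_2 = 3$) then satisfies $l_k < K_2 \log(k+3)$ for $k$ large enough.

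For the lower bound $l_k > K_1 \log k$, I would use that the translation length controls the size of the trace. If $l_\gamma(w) \leq L$ and the eigenvalues of a representative of $[w]$ are $\lambda, \lambda^{-1}$ with $|\lambda| \geq 1$, then $|\lambda| = e^{l_\gamma(w)/2} \leq e^{L/2}$ and $|\mathrm{tr}([w])| \leq |\lambda| + |\lambda|^{-1} \leq 2\, e^{L/2}$. Since $\mathrm{tr}([w]) \in \mathbb{Z}[i]$ (up to sign), the number of possible traces is bounded by $|\{t \in \mathbb{Z}[i] : |t| \leq 2\, e^{L/2}\}| = O(e^L)$; and since the translation length is a function of the trace, the number of distinct $l_i \leq L$ is also $O(e^L)$. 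Equivalently $k = O(e^{l_k})$, so $l_k \geq \log k - O(1) > K_1 \log k$ for any $K_1 < 1$ and $k$ large; in particular $K_1 = 1/3 < 1/2$ works.

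The main technical subtlety I anticipate is the verification that the $l_\gamma(w_n)$ in the upper-bound argument are genuinely distinct for large $n$ (and not merely the classes $[w_n]$). This reduces to the strict monotonicity of $|\lambda_n|$ in $n$, which is immediate from the asymptotic $|\lambda_n| = (n+1)\sqrt{2} + O(n^{-1})$ above. The remainder of the argument is routine bookkeeping on the implicit constants.
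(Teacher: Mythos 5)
Your proof is correct, and for the lower bound it takes a genuinely different and more elementary route than the paper. The paper's upper bound also uses an explicit family, namely $w_k = S^{k+1}R\theta$, whose trace is the real number $k+3$, giving directly $l_k \leq 2\arccosh\big(\frac{k+3}{2}\big) < 2\log(k+3)$; your family $S^nR$ with complex trace $-(n+1)(1-i)$ works too but requires the extra asymptotic bookkeeping and only yields $K_2 > 2$ rather than $K_2 = 2$ (both are allowed by the statement, $K_2 \geq 2$). For the lower bound the paper invokes the Prime Geodesic Theorem for $\mathbb{H}^3/\mathrm{PSL}(2,\mathbb{Z}[i])$, which counts conjugacy classes and gives $k < e^{2l_k}$, hence $K_1$ just under $\frac{1}{2}$. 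Your argument instead counts possible traces in $\mathbb{Z}[i]$: since the group generated by $S,R,L,\theta$ lies in $\mathrm{SL}(2,\mathbb{Z}[i])$, each trace is a Gaussian integer with $\lvert\mathrm{tr}\rvert \leq 2e^{L/2}$, so by the Gauss circle estimate there are $O(e^L)$ possible traces, and since the translation length is a function of $\pm\mathrm{tr}$ this bounds the number of distinct lengths $\leq L$ by $O(e^L)$. This is elementary (no deep input), and in fact gives the stronger $l_k \geq \log k - O(1)$, so any $K_1 < 1$ works, comfortably inside the required $K_1 < \frac{1}{2}$. Both approaches are valid; yours avoids the Prime Geodesic Theorem at the cost of a slightly less immediate constant in the upper bound.
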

\begin{proof}
    We start with the upper bound. Consider the words of the form $w_k=S^{k+1}R\theta$, for $k\geq1$. They correspond to hyperbolic elements, so the translation lengths related to them are strictly positive. More precisely, one can compute that tr$(S^{k+1}R\theta) = k+3$, for any $k\geq1$. Hence, the translation lengths of the geodesics $\gamma$ corresponding their equivalence classes $[w_k]$ are given by:
    \[ l_k' = l_{\gamma}(w_k) =  2\arccosh\bigg(\frac{k+3}{2} \bigg) < 2\log(k+3).\]
    Since the list of lengths $\{l_k'\}_{k\geq1}$ derived from them form a subset of $\{l_i\}_{i\geq1}$, we obtain that: 
    \[l_k \leq l_k' < 2\log(k+3), \ \text{for all } k\geq1.\]

    Now, the argument for the lower bound relies on the following observation: we have that the group $\Gamma$ generated by the 9 matrices coming from (\ref{matrices}) is a subgroup of PSL$(2,\mathbb{Z}[i])$. This is a lattice of PSL$(2,\mathbb{C})$, so by the Prime geodesic theorem for hyperbolic manifolds \cite[Theorem 5.1]{SarnakPrime}, we know that the number of primitive closed geodesics of length up to some number $L$ in $\mathbb{H}^3/\text{PSL}(2,\mathbb{Z}[i])$ is asymptotic to:
    \[\# \{ [\gamma] \in \text{PSL}(2,\mathbb{Z}[i]) \ \text{primitive }: \ l(\gamma)< L \} \sim \frac{e^{2L}}{2L}.  \]

    That implies in particular that there are, asymptotically, at most exponentially many translation lengths up to $L$ in the length spectrum of this manifold. 
    
    Now, since the set $\{l_i\}_{i\geq1}$ is a subset of it -as it contains only the lengths coming form the matrices in $\Gamma$-, this tells us also that there are at most  exponentially many lengths up to $L$ in $\{l_i\}_{i\geq1}$. Let $\{l_1, \cdots, l_k\}$ be this set of translations lengths smaller than $L$. Then, the previous condition translates into:
\[k< e^{2l_k}\]
for $k$ large enough. From this we deduce that $l_k > K_1 \cdot \log(k)$, for some $0<K_1 < \frac{1}{2}$.
\end{proof}
All in all, summing over $i\geq1$, and denoting by $B =\log(\frac{3}{2}(1+\sqrt{2}))$, we obtain:
\begin{align*}
    \sum_{i=1}^{\infty} \mathbb{P}\big[A_{n}^{i}\big]\cdot l_i &= \sum_{i=1}^{k} \mathbb{P}\big[A_{n}^{i}\big]\cdot l_i + \sum_{i=k+1}^{\infty} \mathbb{P}\big[A_{n}^{i}\big]\cdot l_i \\
    &\leq kl_k + 2\sum_{i=k+1}^{\infty} \exp\bigg(- \sum_{r=3}^{\frac{1}{C}\left \lfloor \frac{l_{i-1}}{2B} \right\rfloor} \frac{3^r+K}{2r}\bigg)\cdot \log(i+3) \\
    &\leq kl_k + 2\sum_{i=k+1}^{\infty} \exp\bigg(-\frac{3^{\frac{1}{C}\left \lfloor \frac{l_{i-1}}{2B} \right\rfloor}+K}{\frac{2}{C}{\left \lfloor \frac{l_{i-1}}{2B} \right\rfloor}}\bigg) \cdot \log(i+3) \\ 
    &\leq kl_k + 2\sum_{i=k+1}^{\infty} \exp\bigg(-\frac{3^{\frac{1}{C}\lfloor \frac{\log(i-1)}{4B} \rfloor}+K}{\frac{2}{C}{\lfloor \frac{\log(i+2)}{B} \rfloor}}\bigg)\cdot \log(i+3) \\
    &\leq kl_k + 2\sum_{i=k+1}^{\infty} \exp\big(-2^{\lfloor \frac{\log(i-1)}{2B} \rfloor}\big)\cdot \log(i+3),
\end{align*}
%\marginpar{\textcolor{red}{Not right}}
%\[\sum_{i=1}^{\infty} \exp\bigg(- \sum_{r=3}^{\lfloor \frac{l_{i-1}}{2\log(\frac{3}{2}(1+\sqrt{2}))} \rfloor} \frac{3^r+K'}{2r}\bigg) \leq \sum_{i=1}^{\infty} \exp\bigg(-\frac{3^{\lfloor \frac{l_{i-1}}{2\log(\frac{3}{2}(1+\sqrt{2}))} \rfloor}+K'}{2{\lfloor \frac{l_{i-1}}{2\log(\frac{3}{2}(1+\sqrt{2}))} \rfloor}}\bigg) \leq \sum_{k=1}^{\infty} \exp\bigg(-\frac{3^k+K'}{2k}\bigg), \]
where the latter is a convergent sum. Therefore, we can apply the dominated convergence theorem. This enables us to use Proposition \ref{limitZn}, and so obtain the expression (\ref{sysYn}) for the limit of $\mathbb{E}[\text{sys}(Y_n)]$.
The remaining step is then to see that a.a.s, this is also a valid expression for $\mathbb{E}[\text{sys}(M_n)]$. We approach this in the next section. 

\section{The systole for $M_n$} \label{systoleforMn}

The goal of this section is to prove that the contribution to the expected value of a set of possible ``bad'' manifolds $B_n$ arised from the compactification process is asymptotically negligible. In this way, we can conclude that the expected value of the systole computed in the previous section holds a.a.s for the manifolds $M_n$.

Recall that the manifolds $M_n$ are obtained from a Dehn filling procedure on the manifolds $Y_n$. %These were non-compact random hyperbolic 3-manifolds with totally geodesic boundary, obtained from a gluing of octahedra. 
This compactification process is done in three steps. The first one deals with the "small" cusps, that is, cusps made of few octahedra around them, and uses Andreev's theorem \cite{Andreev} to control the change in geometry. Then, the "medium" and "large" cusps are treated in two separate steps. In these cases, the main tool that assures enough control is a result of Futer-Purcell-Schleimer \cite[Theorem 9.30]{Futer_Purcell}. The complete argument can be found in \cite[Section 4]{me}. We recall here, nonetheless, the notation appearing in this paper.
%A detailed description of this compactification process can be found in the proof of \cite[Proposition 4.1]{me}. 

\vspace*{0.2cm}
\begin{center}
\begin{tabular}{@{} *5l @{}} \toprule
\emph{Cusps of $Y_n$} & \emph{Description}  \\\midrule
 Small   & Of combinatorial length up to $\frac{1}{8}\log_3(n)$  \\ 
 Medium  & Of combinatorial length between $\frac{1}{8}\log_3(n)$ and $n^{1/4}$ \\ 
 Large   & Of combinatorial length bigger than $n^{1/4}$\\\bottomrule
 \hline
\end{tabular}
\end{center}

%\vspace*{0.3cm}
Under these definitions, we consider the following manifolds and their parts.

\vspace*{0.3cm}
\begin{tabular}{@{} *5l @{}}    \toprule
\emph{Models} & \emph{Description}  \\\midrule
 $Y_n$ & Non-compact hyperbolic 3-manifolds with totally geodesic boundary \\ 
 & obtained from a gluing of octahedra, and conditioned on not having  \\ 
 & loops or bigons in its dual graph\\\midrule
 $K_n$ & Manifold obtained from $Y_n$ by filling the small cusps\\ 
 $DK_n$ & Double of $K_n$ \\\midrule
 $M_n$ & Manifold obtained from $Y_n$ by filling the medium and large cusps \\ 
& homeomorphic to the $M_n$ described in Section \ref{probmodel} \\ 
 $DM_n$ & Double of $M_n$ \\\bottomrule
 \hline
\end{tabular}

\vspace{0.3cm}
Finally, note that $Y_n \subset K_n \subset M_n$, and let $\phi:Y_n \rightarrow M_n$ denote the inclusion map between these manifolds. This is the map we will refer to from now on, so the notation $\phi$ will be often omitted. 

%The complete argument can be found in \cite[Section 4]{me}.
Let us start by defining the set $B_n$. Informally, this set is formed either by those manifolds whose geometry gets distorted in the compactification process, or by the ones whose topological construction yields a degenerated systole. More precisely, this translates into the following subsets: 
\[B_n = B^{(1)}_n \cup B^{(2)}_n \cup B^{(3)}_n, \]
where: 
\begin{itemize}[leftmargin=*]
\setlength\itemsep{1em}
\item $B^{(1)}_n = \bigg \{  w\in\Omega_n : \begin{array}{c} \exists   \text{ a closed geodesic } \gamma\in M_n(w), \ l(\gamma) <  C\log(\log(n))  \\ \text{ s.t.} \ \forall \gamma' \text{ closed geodesic in } Y_n(w), \text{ s.t. }  \phi(\gamma') \text{ is homotopic to } \gamma: \\  \frac{l(\gamma)}{l(\gamma')} \notin [1-\epsilon, 1 + \epsilon], \ \text{for some } \epsilon>0 \end{array} \bigg \}$.
%\item $B^{(1)}_n = \bigg \{  w\in\Omega_n : \begin{array}{c} \exists   \text{ a closed geodesic } \gamma\in M_n(w), \ l(\gamma) <  C\log(\log(n))  \\ \text{s.t.} \ \forall \tilde{\gamma} \text{ pre-image of } \gamma \text{ in } Y_n(w): \\  \frac{l(\gamma)}{l(\tilde{\gamma})} \notin [1-\epsilon, 1 + \epsilon], \ \text{for some } \epsilon>0 \end{array} \bigg \}$.
\item $B^{(2)}_n = \bigg \{ w\in\Omega_n : \begin{array}{c}  \exists   \text{ a closed geodesic } \gamma\in Y_n(w), \ l(\gamma) <  C\log(\log(n)) \\ \text{s.t.} \ \gamma  \ \text{becomes homotopically trivial in } M_n(w) \end{array} \bigg \}$.
%\{w\in\Omega_n : \ \text{curves in } Y_n(w)  \ \text{become homotopically trivial in } M_n(w)\}$.
\item $B^{(3)}_n = \{ w\in\Omega_n : \ \text{sys}(M_n(w)) \geq C\log(\log(n)) \}$.
\end{itemize}

Here $0<C<\frac{1}{20}$ is a fixed constant. 
%\text{ a closed geodesic }
Now, we can express the expected value of the systole for $M_n$ in a general way as follows:
\begin{align*}
    \mathbb{E}[\text{sys}(M_n)] &= \sum_{w\in\Omega_n} \mathbb{P}[w] \ \text{sys}(M_n(w)) \\
    &= \sum_{w\in\Omega_n \setminus B_n} \mathbb{P}[w] \ \text{sys}(M_n(w)) + \sum_{w \in B_n} \mathbb{P}[w] \ \text{sys}(M_n(w)) = E_n\textsuperscript{(1)}
     + E_n\textsuperscript{(2)}.
\end{align*}

\begin{remark}
    Note that there is a positive probability that the manifold $M_n$ is not hyperbolic, even if it's small (Theorem \ref{hyperbolic}). Hence, for these elements $w\in\Omega_n$, we set sys$(M_n(w))=0$.
\end{remark}
Thus, we want to prove that for this set of "bad" manifolds $B_n$, the following happens:

\begin{prop} \label{badset}
    \[\lim_{n \rightarrow \infty} E_n\textsuperscript{(2)} = \lim_{n \rightarrow \infty} \sum_{w \in B_n} \mathbb{P}[w] \ \text{sys}(M_n(w)) = 0.\]
\end{prop}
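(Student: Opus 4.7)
The plan is to decompose $B_n = B_n^{(1)} \cup B_n^{(2)} \cup B_n^{(3)}$ and bound each piece via the crude estimate
\[\sum_{w \in A} \mathbb{P}[w]\, \text{sys}(M_n(w)) \leq \mathbb{P}[A] \cdot \sup_{w \in \Omega_n} \text{sys}(M_n(w)).\]
A preliminary step is a deterministic upper bound $\text{sys}(M_n) \leq C' \log n$ for some absolute $C'>0$, valid whenever $M_n$ carries a hyperbolic metric; this follows from a standard Margulis-lemma/volume argument applied to the double $DM_n$, using $\text{Vol}(M_n) = O(n)$. On the rare non-hyperbolic event the systole is zero by convention. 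This reduces the task to showing $\mathbb{P}[B_n^{(i)}] = o(1/\log n)$ for $i = 1, 2, 3$.

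For $B_n^{(1)}$ and $B_n^{(2)}$ the required polynomial (in $n$) decay of $\mathbb{P}[B_n^{(1)} \cup B_n^{(2)}]$ is essentially what is proved in \cite[Proposition 4.1]{me}: the three cusp regimes are treated separately, with Andreev's theorem controlling the small cusps and \cite[Theorem 9.30]{Futer_Purcell} controlling the medium and large cusps, while the dual-graph combinatorics are handled via Corollary \ref{graph} together with Bordenave's tangle-free estimate \cite{Tangle-free}. I would simply invoke this analysis to conclude that both the geometric distortion captured by $B_n^{(1)}$ and the topological degeneration captured by $B_n^{(2)}$ occur with probability decaying faster than $1/\log n$, so that multiplying by $C'\log n$ still gives $o(1)$.

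For $B_n^{(3)}$ I would prove the stronger statement that $\mathbb{P}[\text{sys}(M_n) \geq C\log\log n]$ decays super-polynomially. Set $g = \lfloor C \log\log n \,/\, 2\log(\tfrac{3}{2}(1+\sqrt{2})) \rfloor$. By the word-length/translation-length inequality derived in Section \ref{convergence}, any essential cycle in $G_{Y_n}$ of combinatorial length at most $g$ produces a closed geodesic of length at most $C\log\log n$ in $Y_n$; on the complement of $B_n^{(1)} \cup B_n^{(2)}$ such a geodesic survives in $M_n$ with length within a factor $1\pm\epsilon$, which can be absorbed into $C$. Consequently $B_n^{(3)} \subseteq B_n^{(1)} \cup B_n^{(2)} \cup \{\tau_n > g\}$, and Corollary \ref{graphme} bounds $\mathbb{P}[\tau_n > g]$ by $\exp(-3^g/O(g))$, which is super-polynomially small in $\log n$. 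Summing the three contributions and using the deterministic systole bound yields $E_n^{(2)} \to 0$.

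The main obstacle is the bookkeeping around the constants: one must choose $g$, the parameter $C$ appearing in the definition of the bad sets, and the distortion $\epsilon$ consistently, so that a short essential cycle provided by Corollary \ref{graphme} really does force the systole of $M_n$ (and not merely of $Y_n$) below $C\log\log n$ on the good set. Once this is arranged the argument is modular, with the heavy geometric work already present in \cite[Proposition 4.1]{me} and the combinatorial tail estimates coming from the graph-theoretic corollaries quoted above.
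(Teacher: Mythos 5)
Your decomposition of $B_n$, the reduction to showing $\mathbb{P}[B_n^{(i)}]=o(1/\log n)$, and the treatment of $B_n^{(3)}$ via Corollary~\ref{graphme} all track the paper's argument. However, there is a concrete flaw in the deterministic systole bound. You propose to bound $\text{sys}(M_n)$ by $O(\log n)$ via a Margulis-lemma/volume argument on the double $DM_n$. That argument does give $\text{sys}(DM_n)=O(\log(\text{Vol}(DM_n)))=O(\log n)$, but it is the wrong inequality: since $M_n$ embeds isometrically in $DM_n$ with totally geodesic boundary, every closed geodesic of $M_n$ is a closed geodesic of $DM_n$, hence $\text{sys}(M_n)\geq\text{sys}(DM_n)$. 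A shortest geodesic of $DM_n$ may cross the doubling surface $\partial M_n$ transversally, in which case it does not descend to a closed geodesic of $M_n$, so the bound on $\text{sys}(DM_n)$ gives no upper bound on $\text{sys}(M_n)$. The paper's Lemma~\ref{sysbound} avoids this by applying Buser's area--systole inequality to the totally geodesic boundary surface $S_n(w)=\partial M_n(w)$: its genus is less than $n$, so its area is $O(n)$, hence $\text{sys}(S_n)=O(\log n)$; and since closed geodesics of $S_n$ are closed geodesics of $M_n$, one gets the correct bound $\text{sys}(M_n)\leq\text{sys}(S_n)=O(\log n)$. Your volume heuristic is the right instinct, but it must be applied to the two-dimensional boundary, not the doubled three-manifold.

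A secondary point: for $B_n^{(1)}$ and $B_n^{(2)}$ you invoke \cite[Proposition 4.1]{me} as essentially already containing the needed probability decay, but that proposition controls curves of uniformly bounded length, whereas here the threshold is $C\log\log n$ and grows with $n$. The paper does not merely quote that result; it reworks the argument to make it effective at this larger scale (for instance taking $\delta=1/\log(n)^{1/10}$ in the Futer--Purcell--Schleimer tube estimate and checking that geodesics of length below $C\log\log n$ with $C<\tfrac{1}{20}$ cannot enter the $\delta$-thin parts of the filled manifold). This is more than bookkeeping of constants and is where a genuine extension of the earlier analysis is required.
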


For this, we show that the limit of the sum under these sets $B^{(1)}_n, B^{(2)}_n$ and $B^{(3)}_n$ vanishes. We separate the proof into four different lemmas, the first studying the term sys$(M_n(w))$, and the rest the sum for each subset $B^{(1)}_n, B^{(2)}_n$ and $B^{(3)}_n$.

\begin{lemma}\label{sysbound} Let $w\in \Omega_n$. Then,
    \[ \text{sys}(M_n(w)) = O(\log(n)).\]
\end{lemma}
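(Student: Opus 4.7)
The plan is to bound $\text{sys}(M_n(w))$ from above by the systole of the totally geodesic boundary $\partial M_n$, whose area grows only linearly in $n$.

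First, I would compute that $\text{area}(\partial M_n) = 4\pi n$. The CW-structure obtained from the $n$ truncated tetrahedra, whose $4n$ hexagonal faces are identified in pairs via $2n$ gluings, yields $\chi(M_n) = n - 2n = -n$ by Mayer--Vietoris (each pair-gluing of hexagonal disks contributes $-\chi(\text{disk}) = -1$ to the Euler characteristic). For a compact orientable 3-manifold with boundary one has $\chi(\partial M_n) = 2\chi(M_n) = -2n$, so Gauss--Bonnet applied to the hyperbolic metric of $\partial M_n$ forces
\[\text{area}(\partial M_n) = -2\pi\chi(\partial M_n) = 4\pi n.\]

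Next, every component $\Sigma$ of $\partial M_n$ is a closed hyperbolic surface of area between $4\pi$ and $4\pi n$. A classical embedded-ball argument in $\mathbb{H}^2$ gives, at every point $p\in\Sigma$,
\[\text{area}(\Sigma) \geq 2\pi(\cosh(\text{inj}_\Sigma(p)) - 1),\]
so $\text{inj}_\Sigma(p) \leq \arccosh(1 + \text{area}(\Sigma)/(2\pi))$. Using the identity $\text{sys}(\Sigma) = 2\min_p \text{inj}_\Sigma(p)$ valid on a closed hyperbolic surface, one obtains $\text{sys}(\Sigma) = O(\log(n))$ for every component, and therefore $\text{sys}(\partial M_n) = O(\log(n))$.

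Finally, since $\partial M_n$ is totally geodesic in $M_n$, every closed geodesic lying on $\partial M_n$ is also a closed geodesic of $M_n$. Applying this to a systole-realizing closed geodesic of $\partial M_n$ produces a closed geodesic of $M_n$ of length $O(\log(n))$, so
\[\text{sys}(M_n(w)) \leq \text{sys}(\partial M_n) = O(\log(n)).\]
(If $M_n(w)$ does not carry a hyperbolic metric, the convention $\text{sys}(M_n(w)) = 0$ from the preceding remark makes the bound trivial.) There is no deep obstacle here: the Euler characteristic count is purely combinatorial, the systole estimate for a closed hyperbolic surface is classical, and the crucial geometric input---total geodesicity of $\partial M_n$---is built into the construction of the model, turning any short closed geodesic on the boundary into a short closed geodesic of $M_n$.
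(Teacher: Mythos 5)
Your proof is correct and follows essentially the same strategy as the paper: bound the area of the totally geodesic boundary linearly in $n$ via an Euler characteristic count, apply the classical area–systole inequality for closed hyperbolic surfaces, and then use total geodesicity to transfer the bound to $M_n$. The only cosmetic differences are that you compute $\chi(\partial M_n)$ via $\chi(\partial M)=2\chi(M)$ rather than counting boundary triangles directly, and you rederive the area–systole bound from the embedded-ball argument rather than citing Buser's Lemma 5.2.1; neither changes the substance.
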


\begin{proof}
 By \cite{Bram_Jean}, we know that the boundary of $M_n(w)$ is a random closed hyperbolic surface $S_n(w)$ of genus $g\geq2$. By the Gauss-Bonnet theorem, its area is given by: 
\[ \text{area}(S_n(w)) = -2\pi\chi = 4\pi(g-1).\]
Now, this surface is built out of $4n$ triangles. Hence, by a simple Euler characteristic computation, we see that its genus has to be less than $n$.
%Now, it was proven in \cite[Theorem 2.1]{Bram_Jean} that the genus of $S_n(w)$ is asymptotic to $n$ in probability. Hence, as $n\rightarrow \infty$, we have: 
%\[ \text{area}(S_n(w)) \sim 4\pi(n-1).\]
With this, using the inequality for the systole given by the area growth \cite[Lemma 5.2.1]{Buserbook}, we get: 
\[ \text{sys}(S_n(w)) \leq 2\log(\text{area}(S_n(w))) + K \leq 2\log(4\pi(n-1)) = O(\log(n)).\]
Since the curves lying in $S_n(w)$ are part of the length spectrum of $M_n$, this is in turn an upper bound for $\text{sys}(M_n(w))$, that is, 
\[ \text{sys}(M_n(w)) \leq \text{sys}(S_n(w)) = O(\log(n)). \]
\end{proof}

\begin{lemma} \label{badset1}
    \[\lim_{n \rightarrow \infty} \sum_{w \in B^{(1)}_n} \mathbb{P}[w] \ \text{sys}(M_n(w)) = 0.\]
\end{lemma}

\begin{proof}
% $ $\newline
%We want to prove that the contribution of the set of "bad" manifolds $B_n$ to the expected value is asymptotically negligible. 

We bound the probability $\mathbb{P}[B^{(1)}_n]$.
%The control on the geometry -and therefore on the lengths of the curves- when doing this compactification procedure relies on \cite{Andreev} and \cite[Theorem 9.30]{Futer_Purcell}. 
%By CITE we know that $\text{sys}(M_n) \sim \log(\text{vol}(M_n))$, as $n\rightarrow\infty$. Now, we have that $\text{vol}(Y_n)= n\cdot \nu_O$, where $\nu_O$ denotes the volume of an ideal right-angled regular hyperbolic octahedron. Then, since the volume of a manifold after a Dehn filling decreases, we get that $\text{vol}(M_n) \leq n\cdot \nu_O$, and so: 
%\[\text{sys}(M_n) \leq \log(n\cdot \nu_O), \quad \text{as } n\rightarrow\infty.\]
As mentioned before, the control on the geometry -and therefore on the lengths of the curves- when doing the Dehn filling of the cusps relies on \cite{Andreev} and \cite[Theorem 9.30]{Futer_Purcell}. 

From \cite{Andreev}, one has bilipschitz equivalences between the thick parts of $Y_n$ and $K_n$. %\cite[Lemma 3.6]{Bram_Jean}.
However, these bilipschitz constants may not be sufficiently small, or may accumulate if the cusps are very close to each other.
On the other hand, Theorem 9.30 from \cite{Futer_Purcell} gives bilipschitz equivalences between the thick parts of $K_n$ and $M_n$, provided that the total normalized length of the cusps $L$ satisfies: 
\begin{equation} \label{normalizedlength}
    L^2 \geq \frac{2\pi \cdot 6771\cosh{(0.6\delta + 0.1475)}^5}{ \delta^5} + 11.7.
\end{equation}
We observe that this won't be verified if $M_n$ has many large cusps, or if medium cusps are incident to each other (that is, they share an octahedron). Moreover, as before, there are some geometric conditions that, even if the theorem is applicable, may cause the bilitpschitz constants to degenerate.
Indeed, this might occur if geodesics enter the $\delta$-thin parts of the manifold $M_n$ -where the theorem doesn't give any control. 

Hence, since we can only assure that the length comparison is good enough when avoiding these cases, we redefine $B^{(1)}_n$ to be the set of manifolds for which any of the above occurs, that is, in which:
\begin{itemize}
    \item Small and medium cusps are incident.
    \item $Y_n(w)$ has many large cusps.
    \item Geodesics of length $\leq C\log(\log(n)$ enter the $\delta$-thin parts of the manifold $M_n(w)$, for some small $\delta(n)>0$.  
    \item There exists a closed geodesic $\leq C\log(\log(n)$ in $M_n(w)$ such that every preimage in $Y_n(w)$ goes into octahedra incident to small cusps.
\end{itemize} 
Then, outside this set, using the same arguments as in \cite[Proposition 4.1]{me}, we can conclude that the lengths pre and post compactification are comparable. In fact, the proof that follows gives an effective version of \cite[Proposition 4.1]{me} for curves of lengths up to $C\log(\log(n))$, as opposed to curves of uniformly bounded length.

Let's study the first case, that is, the probability that small and medium cusps are incident. Let $I_{c}$ denote the number of pairs of small or medium incident cusps. By \cite[Claim 1]{me}, we know that the expected number pairs of intersecting cusps of lengths exactly $k, l \leq C = o(n^{1/3})$ is $o(n^{-2/3})$. Thus, summing over all possible values of $k$ and $l$ -that go up to $o(n^{1/4})$ by definition of medium cusps-, gives that $\mathbb{E}[I_{c}] = o(n^{-1/6})$. 
Using Markov's inequality, we obtain, then:
\[ \mathbb{P}[I_{c} \geq 1] \leq \frac{\mathbb{E}[I_{c}]}{1} = o\bigg(\frac{1}{n^{1/6}}\bigg). \]

We analyze the next point: if $Y_n(w)$ has many large cusps. We have, by \cite[Theorem 2.4 (a)]{Bram_Jean}, that the expected number of cusps is $\frac{1}{2}\log(n) + O(1)$. So, by denoting as $C_l$ the number of large cusps in $Y_n(w)$, and applying Markov's inequality here again, we obtain: 
\[ \mathbb{P}[C_l \geq Kn^{1/4}, \ K\in(0,1) ] \leq \frac{\mathbb{E}[C_l]}{Kn^{1/4}} = O\bigg(\frac{\log(n)}{n^{1/4}}\bigg). \]

We deal now with the third case: if $\gamma$ enters the "very thin" part of the manifold $M_n$. Here, we can suppose that the previous bullet points don't occur a.a.s.
%Note that for the components corresponding to the small cusps, this has already been dealt with in the first case. 
Also, note that it is enough to study this event in the thin parts of the compactified medium and large cusps, as we will deal with the components corresponding to the small cusps in the next case.  

We will show that closed geodesics of the length we consider don't enter the thin part at all. 
For this, we consider the double of the manifold resulting from the compactification of the small cusps, $DK_n$. 
This new manifold has medium and large cusps, that are then filled with solid cylinders. 
Then, we consider Margulis tubes of roughly the same area, so that the final manifold models the geometry of the hyperbolic metric in $DM_n$. 

Now, let $\gamma$ be a closed geodesic in the compactified manifold $DM_n$, lying only in one copy of $M_n$, and let $\delta = \frac{1}{\log(n)^{1/10}}$. Consider also the Margulis tubes $T_{r(\delta)}(\alpha)$ of radius $r(\delta)>0$ around a core curve $\alpha$, which contains the $\delta$-thin part of $DM_n$ around this core geodesic. Suppose that $\gamma$ enters the $\delta$-thin part of that manifold. 
That means, then, that it also enters another nested Margulis tube $T_{r(\epsilon)}(\alpha)$, for some fixed $\epsilon>\delta$ but small enough so that the $\epsilon$-thin part of the manifold is indeed still isometric to a standard Margulis tube. On the other hand, since $\gamma$ is a geodesic lying in one copy of $M_n$, we know that it cannot be entirely in the $\epsilon$-thin part. Therefore, its length needs to be at least twice the distance between the boundaries of the two tubes. 

Now, we would like to use the bounds on this distance given by Futer-Purcell-Schleimer in \cite[Theorem 1.1]{DistMargulistube}. For that, we need to check that the length of the core curve $\alpha$ we're considering is less than $\delta$. Recall that the Margulis tube around $\alpha$ corresponded, pre-compactification, to a cusp  neighbourhood around a medium or large cusp. Since these have total normalized length $L \geq \sqrt{\frac{1}{8}\log_3(n)}$, we get, by \cite[Corollary 6.13]{Futer_Purcell}, that the length of the core curve $\alpha$ is bounded by: 
\[ l(\alpha) < \frac{2\pi}{L^2 - 28.78} \leq \frac{2\pi}{\frac{1}{8}\log_3(n) - 28.78} < \frac{16\pi}{\log_3(n)}, \]
which is indeed less that $\delta$ when $n$ is large enough. Therefore, using now \cite[Theorem 1.1]{DistMargulistube}, we obtain that the distance between the boundary torii is bounded below by: 
\begin{align*}
    d(\partial T_{r(\delta)}(\alpha), \partial T_{r(\epsilon)}(\alpha)) &\geq \arccosh\bigg(\frac{\epsilon}{\sqrt{7.256 \delta}}\bigg)-0.0424 \\
    &> \arccosh\bigg(\frac{\epsilon}{\sqrt{7.256 \frac{1}{\log(n)^{1/10}}}}\bigg) \\ 
    &> \log\bigg(\frac{\epsilon\log(n)^{1/20}}{\sqrt{7.256}}\bigg)\\
    &> \log\bigg(\frac{\epsilon}{\sqrt{7.256}}\bigg) + \frac{1}{20}\log(\log(n)).
\end{align*}

This implies, then, that the length of $\gamma$ would need to be strictly bigger than $\frac{1}{10}\log(\log(n))$ to be able to enter the $\delta$-thin part around $\alpha$. However, we are considering geodesics of length less than $C\log(\log(n))$, where $C<\frac{1}{20}$. Therefore, this yields that, for $n$ big enough, $\gamma$ doesn't enter the $\delta$-thin parts corresponding to filled medium and large cusps.

A last remark is that it is enough to study this case for this value of $\delta$, that is, outside the $\delta$-thin part, the length of the curve is already controlled. Indeed, even if $\delta$ is tending to 0 as $n\rightarrow\infty$, the condition (\ref{normalizedlength}) on the total normalized length is still satisfied. Therefore \cite[Theorem 9.30]{Futer_Purcell} applies, and gives a bilipschitz equivalence between the $\delta$-thick parts of both manifolds, with bilipschitz constant tending to 1 as $n\rightarrow\infty$.

Finally, we study the last case. We suppose here again that the previous cases don't occur a.a.s. 
So, let $\gamma$ be some closed geodesic in $M_n$, and $\tilde{\gamma}$ a preimage in $Y_n$, and suppose that $\tilde{\gamma}$ enters into an octahedron incident to a small cusp. 
Then, we have that the cycles in the dual graph $G_{Y_n}$ corresponding to this curve and the parabolic element that goes around the cusp intersect. %\cite[Proposition 3.4]{me}
On the other hand, since the $\delta$-thick parts of $M_n$ and $K_n$ are bilipschitz with bilipschitz constant tending to 1, using the isometry from \cite{Andreev},
% by Lemma \ref{lemma_Andreev},
we can deduce that the length of the part of $\tilde{\gamma}$ lying outside the octahedra incident to the small cusp is of length less than $l(\gamma)<C\log(\log(n))$.
%isometry given by Lemma \ref{lemma_Andreev}, and

Hence, using now \cite[Proposition 3.4]{me}, we have that the part of the cycle in $G_{Y_n}$ corresponding to the part of $\tilde{\gamma}$ that is outside the octahedra incident to the cusp, has length bounded above by $\log(n)^C < \frac{1}{8}\log_3(n)$ for $n$ large enough. 
Therefore, we have found two cycles in $G_{Y_n}$ that lie inside a ball of diameter $\leq \frac{1}{8}\log_{3}(n)+ \log(n)^C$ from some common vertex. % since the first has length less than $C\log(\log(n)))$, and the second less than $\frac{1}{4}\log_3(n)$. 
That would imply then that $G_{Y_n}$ is $l$-tangled for $l\leq\frac{1}{8}\log_3(n)$, and $n$ large enough. We recall that a multigraph $G$ is said to be 
$l$\textit{-tangled}, for $l>0$, if there exists some neighbourhood of radius $l>0$ in $G$ containing more than one cycle. Otherwise, we say that $G$ is $l$\textit{-tangle-free}.
 %if every neighbourhood of radius $l>0$ in $G$ contains at most one cycle. Otherwise, we say that $G$ is $l$\textit{-tangled}. 

Nevertheless, Lemma 9 from \cite{Tangle-free} tell us that random 4-regular graphs -so in particular $G_{Y_n}$- are $l$-tangle free, for $l>0$ not too large. More precisely, we have:
%However, by \cite[Lemma 9]{Tangle-free}, we have: %\cite[Lemma 9]{Tangle-free}
\[ \mathbb{P} \bigg[\ \textrm{$G_{Y_n}$ is $\frac{1}{8}\log_3(n)$-tangled}\ \bigg] = O\Big(\frac{3^{\frac{1}{2}\log_3(n)}}{n}\Big) \approx O\Big(\frac{1}{n^{1/2}}\Big).\]
%\[ \mathbb{P} [\ \textrm{$G_{Y_n}$ is $\frac{1}{8}\log(n)$-tangled}\ ] = O\Big(\frac{(d-1)^{4l}}{n}\Big).\]

A similar argument works if the preimage $\tilde{\gamma}$ enters into octahedra incident to different small cusps (which could happen if the small cusps are close to each other).
This would imply, then, that in the dual graph $G_{Y_n}$ we can find two cycles of combinatorial length $<\frac{1}{8}\log_3(n)$ at distance $<C\log(\log(n))$, yielding that $G_{Y_n}$ is $l$-tangled for $l\leq\frac{1}{8}\log_3(n)$. 
However, as shown above, the probability that this happens tends to zero as $n\rightarrow\infty$.

All together, we have that: %, for $w\in B_n^{(1)}$, 
\begin{align*}
    \mathbb{P}[B_n^{(1)}] &\leq \mathbb{P} [\ \textrm{$G_{Y_n}$ is $\frac{1}{8}\log(n)$-tangled}\ ] + \mathbb{P}[C_l > Cn^{1/4}] + \mathbb{P}[I_{c} \geq 1] \\
    &\leq O\Big(\frac{1}{n^{1/2}}\Big) + O\bigg(\frac{\log(n)}{n^{1/4}}\bigg) + o\bigg(\frac{1}{n^{1/6}}\bigg) \\ 
    &\leq O\Big(\frac{1}{n^{1/6}}\Big).
\end{align*}

Therefore, 
$B^{(1)}_n$%\textsuperscript{(1)}
\[ \sum_{w \in B^{(1)}_n} \mathbb{P}[w] \ sys(M_n(w)) \leq \mathbb{P}[B^{(1)}_n]  \max_{w\in B^{(1)}_n} \{\text{sys}(M_n(w))\} \leq O\Big(\frac{1}{n^{1/6}}\cdot \log(n)\Big),\]
which tends to 0 as $n \rightarrow \infty$.

\end{proof}

\begin{lemma} \label{badset2}
    \[\lim_{n \rightarrow \infty} \sum_{w \in B^{(2)}_n} \mathbb{P}[w] \ \text{sys}(M_n(w)) = 0.\]
\end{lemma}

\begin{proof}
Note that for a short closed geodesic to be homotopically trivial after the compactification, it needs to go around at least one cusp in $Y_n$. Since the geodesic is of length $<C\log(\log(n))$, this needs to be a small cusp.

Now, if we look at the paths in the dual graph of $Y_n$ that this closed geodesic and the horocycle corresponding to the cusp do, we see that they are incident. 
This would imply, then, that $G_{Y_n}$ is $l$-tangled, for $l\leq\frac{1}{8}\log_3(n)$. However, again by \cite[Lemma 9]{Tangle-free}, we get that: % for $w \in B^{(2)}_n$: 
\[ \mathbb{P}[B^{(2)}_n] \leq \mathbb{P}[\ \textrm{$G_{Y_n}$ is $\frac{1}{8}\log_3(n)$-tangled}\ ] = O\Big(\frac{1}{n^{1/2}}\Big).\]

Therefore, 
\[ \sum_{w \in B^{(2)}_n} \mathbb{P}[w] \ \text{sys}(M_n(w))  \leq O\Big(\frac{1}{n^{1/2}}\cdot \log(n)\Big) \rightarrow 0, \ \text{ as } n\rightarrow\infty.\]
\end{proof}

\begin{lemma} \label{badset3}
    \[\lim_{n \rightarrow \infty} \sum_{w \in B^{(3)}_n} \mathbb{P}[w] \ \text{sys}(M_n(w)) = 0.\]
\end{lemma}

\begin{proof}
For simplicity, here we can suppose that the length of short geodesics don't change when doing the Dehn filling, that is, that $w\notin B^{(1)}_n$ and $w\notin B^{(2)}_n$. Like this, the set $B^{(3)}_n$ can be also defined as:
\[ B^{(3)}_n = \{w\in\Omega_n : \text{sys}(Y_n(w)) > C'\log(\log(n)), \ \text{for} \ C' \in (0,1) \}. \]
This condition, in turn, can be translated into a condition on the length of paths in $G_{Y_n}$. More precisely, if the translation length of all closed geodesics in $Y_n$ is larger than $C'\log(\log(n))$, this implies that all corresponding closed paths have combinatorial length larger than $\frac{C'\log(\log(n))}{2\log(\frac{3}{2}(1+\sqrt{2}))} \geq \lfloor C''\log(\log(n)) \rfloor$, for $C'' \in (0, 1)$. %This inequality is deduced using the sub-multiplicity of the operator norm for matrices in $PSL(2, \mathbb{Z}[i])$. 
Hence, the probability of the event $B^{(3)}_n$ is bounded by: 
\[\mathbb{P}[ w\in\Omega_n : G_{Y_n} \ \text{contains no essential cycles of lengths} \ \in \{3,\ldots, \lfloor C''\log(\log(n)) \rfloor\}] .\]
For this, we use again Corollary \ref{graphme}. This gives us: 
\begin{align*}
    \mathbb{P}[B^{(3)}_n] &\leq \exp\bigg(- \sum_{r=3}^{\lfloor C''\log(\log(n)) \rfloor\}} \frac{3^r}{2r}\bigg(1-\frac{1}{3^r}\bigg) + o(1)\bigg) \\
    & \leq K\exp\bigg(- \frac{3^{\lfloor C''\log(\log(n)) \rfloor}-1}{2\lfloor C''\log(\log(n)) \rfloor}\bigg) \quad \text{for some} \ K>0, \\
    & \leq O\bigg(\exp\big(-\frac{3^{\lfloor \log(\log(n)) \rfloor}}{\lfloor \log(\log(n)) \rfloor}\big) \bigg).
    %& \leq O\bigg(\frac{1}{e^{\frac{n}{\log(n)}}} \bigg).
\end{align*}
Therefore, 
\[ \sum_{w \in B^{(3)}_n} \mathbb{P}[w] \ \text{sys}(M_n(w))  \leq O\Bigg(\frac{1}{e^{\frac{3^{\lfloor \log(\log(n)) \rfloor}}{\lfloor \log(\log(n)) \rfloor}}}\cdot \log(n)\Bigg),\]
which goes to 0 as $n \rightarrow \infty$.
\end{proof}

With all this, we are ready to prove Proposition \ref{badset}.
\begin{proof}[Proof of Proposition \ref{badset}]
Having studied the three cases, and using the systole bound from Lemma \ref{sysbound}, we obtain: 
\begin{align*}
    \sum_{w \in B_n} \mathbb{P}[w] \ \text{sys}(M_n(w)) &\leq \mathbb{P}[B_n]  \max_{w\in B_n} \{\text{sys}(M_n(w))\} \\ 
    &\leq (\mathbb{P}[B^{(1)}_n] + \mathbb{P}[B^{(2)}_n] + \mathbb{P}[B^{(3)}_n]) \max_{w\in B_n} \{\text{sys}(M_n(w))\} \\
    &\leq O\bigg(\frac{1}{n^{1/6}}\bigg) O\big(\log(n)\big),
\end{align*}
which tends to 0 as $n \rightarrow \infty$. %giving Proposition \ref{badset}.   
\end{proof}

Now, this, together with Proposition \ref{limitZn}, enables to prove what we aimed for: 

\begin{reptheorem}{sysMn}
Let $\{l_i\}_{i\geq1}$ be the ordered set of all possible translation lengths coming from (classes of) words $[w] \in \mathcal{W}$. Then,
\[ \lim_{n\rightarrow \infty} \E[\text{sys}(M_n)] = \sum_{i= 1}^{\infty} \bigg( \prod_{\scriptscriptstyle[w]\in \mathcal{W}_{l_{i-1}}} \exp \bigg(\frac{\abs{[w]}}{2\abs{w}3^{\abs{w}}} \bigg)  \bigg) \bigg(1 - \prod_{\scriptscriptstyle [w]\in \mathcal{W}_{l_i} \setminus \mathcal{W}_{l_{i-1}}} \exp\bigg(\frac{\abs{[w]}}{2\abs{w}3^{\abs{w}}} \bigg)  \bigg)\cdot l_i.\]
\end{reptheorem}

\begin{proof}
    Using Proposition \ref{limitZn}, we proved in Section \ref{convergence} that the right hand side of the equality is a valid expression for the limit of the expected systole of $Y_n$. 

    On the other hand, Proposition \ref{badset} implies that, as $n \rightarrow \infty$, 
    \[\mathbb{E}(\text{sys}(M_n)) = \sum_{w\in\Omega_n \setminus B_n} \mathbb{P}[w] \ \text{sys}(M_n(w)). \]
    Since $B_n$ was exactly the set of manifolds for which the compactification process could degenerate the length of their curves, for all $w\in\Omega_n \setminus B_n$, we have that, $\forall \epsilon>0$: 
    \[\frac{1}{1+\epsilon}\lim_{n \rightarrow \infty} \mathbb{E}_n(\text{sys}(Y_n)) \leq \lim_{n \rightarrow \infty} \mathbb{E}_n(\text{sys}(M_n)) \leq (1+\epsilon)\lim_{n \rightarrow \infty} \mathbb{E}_n(\text{sys}(Y_n)).\]
    
    Therefore, all combined, we obtain the expression we are looking for. 
\end{proof}

\section{A numerical value} \label{numericalvalue}

 Since we have a formula for the limit of $\mathbb{E}(\text{sys}(M_n))$, we can try to compute a numerical value of it. The problem is, that the list of ordered lengths $l_i$ is hard to determine, and the program for computing the sets $\mathcal{W}_{l_i} \setminus \mathcal{W}_{l_{i-1}}$ for all $i\geq1$ is computationally very slow. 
 
 Indeed, even though the lengths can be computed using formula (\ref{l}), this equality depends on the trace of some class of words, which corresponds to a complex number. Hence, we cannot order lengths by trace, as these don't have a natural ordering. This fact differentiates it from the two dimensional case \cite{Bram1}, making the computation much harder. 

 One could consider, then, taking the $w$-distance (defined just after as \ref{distword}) to get the ordered list. 
 However, that doesn't completely work either, as it is not true that the translation length increases whenever this $w$-distance does so. 
 Another natural parameter for this is the combinatorial length of the (classes) of words. In this case, it does exists a coarse comparison between word length and geometric length \cite[Proposition 3.4]{me} which would enable us, a priori, to obtain this ordered list of lengths. However, the bound that this gives is too big to make it computationally feasible. 
 These obstacles also make it less evident to get a complete list of words of lengths less than $l_i$, for every $i\geq1$.

 On top of it, the complexity of the computation to check whether two words belong to the same equivalent class grows exponentially on the length of the word. Thus, computing the classes of words belonging to $\mathcal{W}_{l_i} \setminus \mathcal{W}_{l_{i-1}}$ is numerically doable only when $i$ is very small. 
 
 Hence, to get an approximated value for the limit, we do the following: we compute the first terms of the sum, for which the lengths and the sets $\mathcal{W}_{l_i} \setminus \mathcal{W}_{l_{i-1}}$ can be determined, and then we give an upper-bound for the rest of the sum. % namely the error term.
 To overcome the aforementioned constraints related to the ordered list of lengths, here we mix different techniques - using the $w$-distance in some cases, and the structure of the alphabet matrices in others- that allow us to compute more efficiently an ordered list of sufficient lengths to obtain a good approximation. 

 In order to simplify the formulas, we define some notation. For all $i\geq1$, let: 
\[ p_i = \lim_{n\rightarrow \infty}\mathbb{P}[A_n^i] = \bigg( \prod_{\scriptscriptstyle[w]\in \mathcal{W}_{l_{i-1}}} \exp \bigg(\frac{\abs{[w]}}{2\abs{w}3^{\abs{w}}} \bigg)  \bigg) \bigg(1 - \prod_{\scriptscriptstyle[w]\in \mathcal{W}_{l_i} \setminus \mathcal{W}_{l_{i-1}}} \exp\bigg(\frac{\abs{[w]}}{2\abs{w}3^{\abs{w}}} \bigg)  \bigg).\]

Then, we can write: 
\[ \lim_{n \rightarrow \infty} \mathbb{E}(\text{sys}(M_n)) = \sum_{i=1}^{k} p_i\cdot l_i + \sum_{i=k+1}^{\infty} p_i\cdot l_i = S_c + S_e, \]
where $S_c$ represents the computable part of the sum, and $S_e$ the error term. We study them separately in the next two subsections.

\subsection{The program for $S_c$}

To compute the finite sum $S_{c} = \sum_{i=1}^{k}p_i\cdot l_i$, we need to know the first $k$ values of $\{l_i\}_{i\geq1}$, and the (classes of) words that correspond to each of these lengths.

%which classes of words are in $\mathcal{W}_{l_{i-1}}$ and $\mathcal{W}_{l_{i}}$. In other words, for each $i\geq1$, we need to know which classes of words have 

For that, we start by computing all words of translation length less than some number $D>0$. As mentioned before, this is not completely straightforward. To do so, we will (partly) use the $w$-distance, which we define as follows. 

Let $P$ be the plane determined by the triple $\{0,i,\infty\}$, and $w\in W$. Then, the $w$-distance, denoted by $d(w)$, is the distance in the upper half-space of $\mathbb{H}^3$ between the planes $P$ and $w(P)$. This is given by: 
\begin{align}\label{distword}
    d(w) &= \min_{\substack{(x_1,y_1)\in P \\ (x_2,y_2)\in w(P)}} \{d((x_1, y_1),(x_2, y_2))\} \\ &= \min_{\substack{(x_1,y_1)\in P \\ (x_2,y_2)\in w(P)}} \bigg\{ \arccosh\bigg(1+\frac{(x_2 - x_1)^2 + (y_2 - y_1)^2}{2y_1y_2}\bigg) \bigg\}.
\end{align}
Thus, we first compute the list of words of $d(w)<D$. This can be easily done as the $w$-distance increases whenever the combinatorial length of the word does. The reason is that the planed spanned by the larger word is contained in the half-space spanned by the smaller one. Indeed, if $H_2$ denotes the subset of $\mathbb{H}^3$ containing all points with positive first component, we have that, for any of the 9 isometries $M\in\{S\theta^{i}, R\theta^{i}, L\theta^{i}, \ i=0,1,2\}$, 
 \[M(H_2) \subset H_2.\]
From this, one infers that for any isometry $w$ product of the previous matrices, we get:
\[w(M(H_2)) \subset w(H_2).\]

Figure \ref{Planes} shows an example (for more details, see \cite[Proposition 3.4]{me}). This tells us, in particular, that the axis of any isometry $w$ intersects both $P$ and $w(P)$, as well as all the planes spanned by all shorter words of $w$. From this, and given the definition of the distance, we deduce that the $w$-distance corresponding to some word $w$ is always less or equal that the translation length of the curve related to it, that is,
\[ d(w) \leq l_{\gamma}(w), \quad \text{for any word } w\in W.\]
As a consequence, we can conclude that the list $L$ of words of $d(w)<D$ contains all words of translation length less than D. Indeed, for any word $w$ such that $d(w)>D$, we would have $D<d(w)\leq l_{\gamma}(w)$. Hence, the remaining step to get the initial list is to filter the words in $L$ by computing their translation length.

\begin{figure}[H]
    \centering
    \begin{minipage}{0.45\textwidth}
        \centering
        \includegraphics[width=0.9\textwidth]{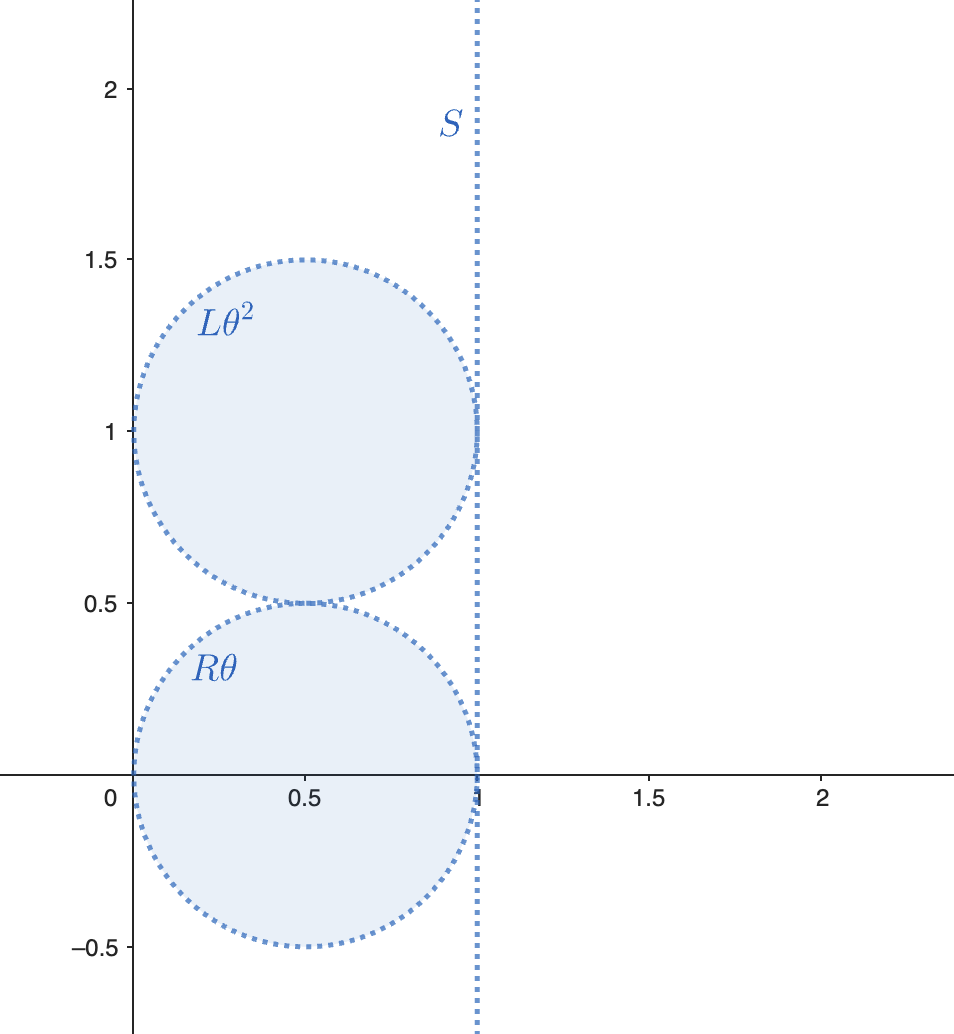} 
        %\caption{Ideal regular octahedron in the upper half-space model $H^3$}
    \end{minipage}\hfill
    \begin{minipage}{0.45\textwidth}
        \centering
        \includegraphics[width=0.9\textwidth]{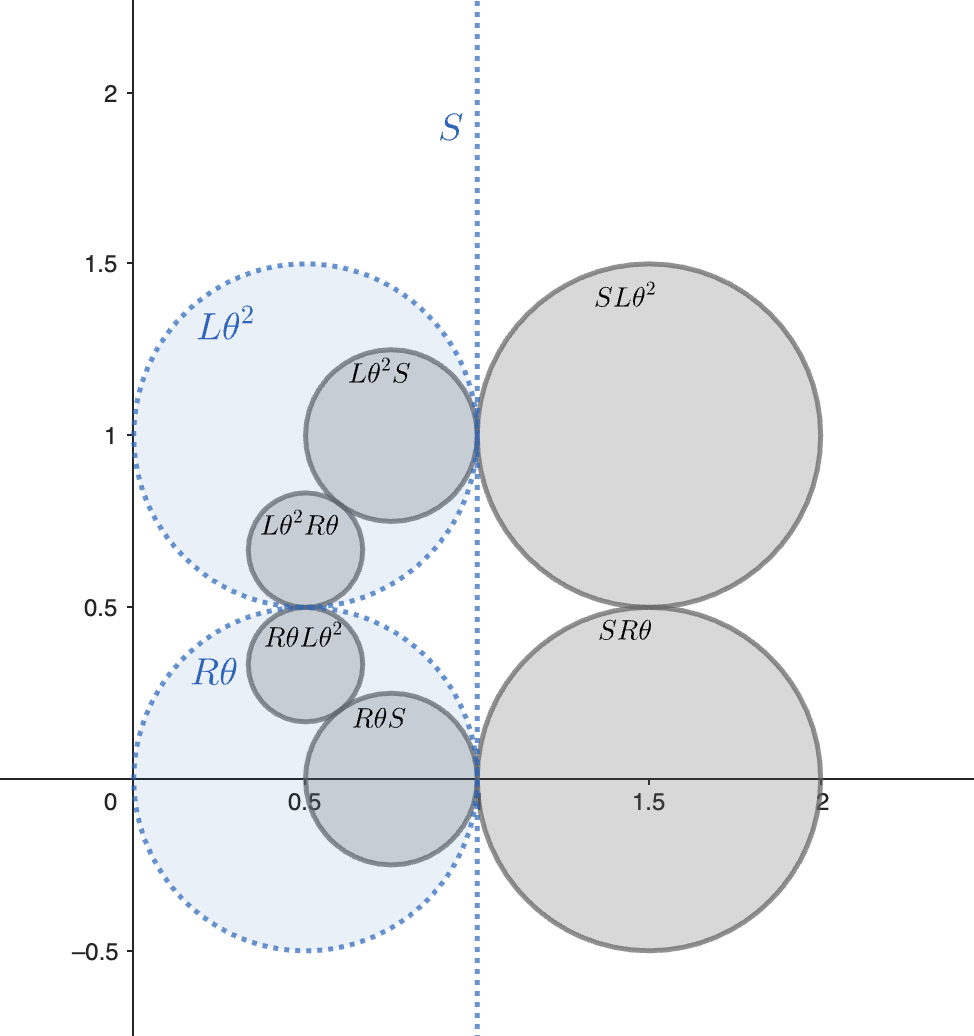}
        %\caption{Topological image of the labeled octahedron.}
    \end{minipage}
    \caption{View from "infinity" in the upper half-space model of $\mathbb{H}^3$: the circles in blue represent the hemispheres spanned by 1-letter words (represented by $S, R\theta, L\theta^2$), and the ones in grey all the hemispheres spanned by the 2-letter hyperbolic words of positive distance from $P$.}
    \label{Planes}
\end{figure}

This procedure, however, fails for certain words. More precisely, for those corresponding to parabolic elements with an extra twist when gluing the faces of the octahedra, for instance $w=SSS\theta$. These now correspond to hyperbolic elements, so their translation length is positive, but their $w$-distance is always zero. Hence, we need another way to see when we need to stop considering them. For that, we compute directly their translation length, and rely on the following lemma: 

\begin{lemma}
    Let $l_{\gamma}(w)$ denote the translation length of a closed curve $\gamma \in Y_n$ corresponding to a class of words $[w]\in \mathcal{W}$. Then, for all $k\geq3$, we have:
    \[l_{\gamma}(S^k\theta) < l_{\gamma}(S^{k+1}\theta)\]
    \[l_{\gamma}(S^k\theta^2) < l_{\gamma}(S^{k+1}\theta^2).\]
\end{lemma}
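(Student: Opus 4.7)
My plan is to reduce both inequalities to a single monotonicity statement via a direct computation of traces, together with the closed-form expression for the translation length of a loxodromic element in terms of its complex trace.

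First I would multiply out the matrices explicitly. Using $S^k = \begin{pmatrix} 1 & k \\ 0 & 1\end{pmatrix}$, a short calculation gives
\[\text{tr}(S^k\theta) = 1+ki, \qquad \text{tr}(S^k\theta^2) = -1+ki.\]
Both traces are genuinely complex (not real), so the corresponding elements are loxodromic; in particular the simpler real-trace formula $2\arccosh(|t|/2)$ does not apply, and the complex $\arccosh$ in $(\ref{l})$ must be handled with care.

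The key tool I would then invoke is the identity
\[\cosh\bigl(l_\gamma(w)\bigr) = \frac{|\text{tr}(w)|^2 + |\text{tr}(w)^2 - 4|}{4},\]
valid for any loxodromic element of $\text{PSL}(2,\mathbb{C})$. I would derive it briefly by writing $\text{tr}(w) = 2\cosh(L/2)$ with complex length $L = l+i\omega$, from which $|\text{tr}(w)|^2 = 4|\cosh(L/2)|^2 = 2(\cosh l + \cos\omega)$ and $|\text{tr}(w)^2 - 4| = 4|\sinh(L/2)|^2 = 2(\cosh l - \cos\omega)$; summing eliminates $\omega$ and leaves the identity above.

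Plugging in $t = \pm 1 + ki$, one finds $|t|^2 = 1+k^2$ and $|t^2-4|^2 = (3+k^2)^2 + 4k^2 = (k^2+1)(k^2+9)$, and crucially these values are identical for both signs of the real part. Hence both families satisfy
\[\cosh\bigl(l_\gamma(S^k\theta)\bigr) = \cosh\bigl(l_\gamma(S^k\theta^2)\bigr) = \frac{(k^2+1)+\sqrt{(k^2+1)(k^2+9)}}{4},\]
whose right-hand side is manifestly strictly increasing in $k$. Since $\cosh$ is strictly increasing on $[0,\infty)$, both inequalities of the lemma follow at once (and in fact the two families have the same translation length for every $k$, which might be worth remarking on). I do not anticipate any serious obstacle; the only delicate point is the passage from the $\arccosh$ formula of the paper to the $\cosh l$ identity above, which is standard but worth spelling out carefully since $\arccosh$ on the complex values $t/2$ is multi-valued.
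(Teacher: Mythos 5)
Your proof is correct, and it takes a genuinely different and cleaner route than the paper's. The paper substitutes $t=1+ki$ directly into the explicit form $z=\tfrac{t}{2}+\sqrt{(t/2)^2-1}$, expands $\mathrm{Re}(z)$ and $\mathrm{Im}(z)$ using polar-form square roots, and then shows by a derivative computation that the resulting trigonometric expression for $|z|^2$ is increasing in $k$; it then asserts that the $\theta^2$ case ``gives the same expressions'' and follows. Your approach replaces all of that with the conjugation-invariant identity
\[
\cosh\bigl(l_\gamma(w)\bigr)=\frac{\lvert\mathrm{tr}(w)\rvert^{2}+\lvert\mathrm{tr}(w)^{2}-4\rvert}{4},
\]
whose derivation you sketch correctly, and which collapses both families at once to
\[
\cosh\bigl(l_\gamma(S^{k}\theta)\bigr)=\cosh\bigl(l_\gamma(S^{k}\theta^{2})\bigr)=\frac{(k^{2}+1)+\sqrt{(k^{2}+1)(k^{2}+9)}}{4},
\]
an expression that is manifestly increasing in $k$ without any branch-of-$\arccosh$ or sign bookkeeping. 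This buys several things: it sidesteps the multi-valuedness of the complex $\arccosh$ entirely; it makes the equality $l_\gamma(S^{k}\theta)=l_\gamma(S^{k}\theta^{2})$ explicit rather than implicit (the paper only says the real and imaginary parts ``have the same expressions,'' which strictly speaking differ by the sign of the real part of the trace, so your observation is both sharper and more honest); and it avoids the algebraic slip in the paper's intermediate computation, where the square root is written as $\sqrt{2ki-\tfrac{k^{2}+3}{4}}$ rather than $\sqrt{\tfrac{2ki-(k^{2}+3)}{4}}$, producing the spurious quartic $k^{4}+70k^{2}+9$ in place of the correct $k^{4}+10k^{2}+9=(k^{2}+1)(k^{2}+9)$. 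That typo does not invalidate the paper's conclusion, but your method makes it impossible to commit. The only thing to be careful about, which you flag yourself, is the derivation of the $\cosh l$ identity; your sketch via $|\cosh(L/2)|^{2}=\tfrac12(\cosh l+\cos\omega)$ and $|\sinh(L/2)|^{2}=\tfrac12(\cosh l-\cos\omega)$ is exactly right and worth writing out in full if this were to be published.
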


\begin{proof}
One can compute that the traces of words in $[S^k\theta]$ are equal to $\pm (ki+1)$, and hence those of $[S^{k+1}\theta]$ are $\pm ((k+1)i+1)$, for every $k\geq1$. Now, recall that:
\begin{align*}
    l_{\gamma}(S^k\theta) &= 2 \text{Re}\bigg[\arccosh\bigg(\frac{ki+1}{2}\bigg)\bigg] = 2 \text{Re}\bigg[\ln\bigg(\frac{ki+1}{2} + \sqrt{\Big( \frac{ki+1}{2}\Big)^2-1}\bigg) \bigg] \\
    &= 2 \log\bigg(\left\lvert\frac{ki+1}{2} + \sqrt{\Big(\frac{ki+1}{2}\Big)^2-1}\right\rvert \bigg).
\end{align*}
Thus, to see that the translation of these word increases, it is sufficient to see that the absolute value of this complex number $z$ inside the $\log$ does so. We know that $\abs{z} = \sqrt{\text{Re}(z)^2 + \text{Im}(z)^2}$, where: 
\[
    \text{Re}(z) = \frac{1}{2} + \Re\bigg(\sqrt{2ki - \frac{k^2+3}{4}}\bigg) = \frac{1}{2} + \bigg( \frac{1}{2}\sqrt[\leftroot{-2}\uproot{2}4]{k^4+70k^2+9} \cdot \sin\Big(\frac{1}{2}\arctan(\frac{8k}{k^2+3})\Big) \bigg) 
\] \[ \quad \text{Im}(z) = \frac{k}{2} + \Im\bigg(\sqrt{2ki - \frac{k^2+3}{4}}\bigg) = \frac{k}{2} + \bigg( \frac{1}{2}\sqrt[\leftroot{-2}\uproot{2}4]{k^4+70k^2+9} \cdot \cos\Big(\frac{1}{2}\arctan(\frac{8k}{k^2+3})\Big) \bigg). \]  
By computing their squares, and summing them, we get: 
\begin{align*}
    \abs{z}^2 &= \text{Re}(z)^2 + \text{Im}(z)^2 \\ 
    &= \frac{k^2+1}{4} + \frac{1}{4}\sqrt{k^4+70k^2+9} \\ &\quad +  \frac{1}{2} \sqrt[\leftroot{-2}\uproot{2}4]{k^4+70k^2+9}\bigg(\sin\Big(\frac{1}{2}\arctan(\frac{8k}{k^2+3})\Big) + k\cos\Big(\frac{1}{2}\arctan(\frac{8k}{k^2+3})\Big) \bigg).
\end{align*}
The first line of the expression is clearly increasing in $k$, for $k\geq3$. For the second line, consider $f(k) = \sin\Big(\frac{1}{2}\arctan(\frac{8k}{k^2+3})\Big) + k\cos\Big(\frac{1}{2}\arctan(\frac{8k}{k^2+3})\Big)$. 
Since $0<\sin(\frac{1}{2}\arctan(\frac{8k}{k^2+3})<0.55$ and $0.83<\cos(\frac{1}{2}\arctan(\frac{8k}{k^2+3})<1$ for all $k\geq3$, $f(k)$ is a positive function. Now, if we compute its derivative, we have that:
\[ f'(k) = \frac{4k(k^2-3)\sin(\frac{1}{2}\arctan(\frac{8k}{(k^2+3)})) + (k^4+66k^2+21)\cos(\frac{1}{2}\arctan(\frac{8k}{(k^2+3)}))}{k^4+70k^2+9} > 0.\]
As the product and composition of positive increasing functions is positive and increasing, we finally obtain that $\abs{z} = \sqrt{\text{Re}(z)^2 + \text{Im}(z)^2}$ is increasing in $k$, which is what we wanted. 

For the class $[S^k\theta^2]$, an analogous argument works: the traces of words in $[S^k\theta^2]$ are equal to $\pm (ki-1)$, and hence those of $[S^{k+1}\theta^2]$ are $\pm ((k+1)i-1)$, for every $k\geq1$. We get, then, the same expressions for the real an imaginary parts, so the rest follows from above.  

\end{proof}
This tells us, then, that once one of these words have translation length bigger than $D$, we can stop checking the larger ones of that same form. 

Joining these two procedures, we obtain the complete list of word of translation length less than $D$. This process is carried out by a SageMath program \cite{sagemath}, a link to which is provided at the end of the section. Here we present a pseudo code.
%A pseudo code of this Sage program can be found at the end of the section. 

\begin{algorithm}[H]
\caption{Computes the list of words of translation length less than D}\label{main}
\begin{algorithmic}[1]
\Procedure{Trans. length}{$D$}
\State $\mathbf{Set}$ list $tocheck_{hyp}$ \Comment{of hyp. words that we need to check at each step} 
\State $\mathbf{Set}$ list $valid_{hyp}$ \Comment{of words from $tocheck_{hyp}$ that have $w$-distance $<$ D} 
\State $\mathbf{Set}$ list $tocheck_{par}$ \Comment{of parab. words with twists that we need to check at each step} 
\State $\mathbf{Set}$ list $valid_{par}$ \Comment{of words from $tocheck_{par}$ that have tr. length $<$ D} 
\State $\mathbf{Set}$ list $valid$ \Comment{of all words of tr. length $<$ D} 
\State $\mathbf{Set}$ list $trace_{w}$ \Comment{of traces of all words from $valid$} 
\\
\BState \emph{Initial case}:
\State $tocheck_{hyp} \gets \text{3-tuples of indices from} \ \{0,1,2\}$ 
\For{\text{each tuple in} $tocheck_{hyp}$} 
\If{tuple is a parabolic word}
%\State $w \gets \text{Matrix corresponding to tuple}$
\State $w2, w3 \gets \text{Matrices corresponding to tuple, with one and two twists}$
\State $l2 \gets \text{translation length corresponding to }w2$
\If{l2 \text{is less than} D}
\State \small SAVE PAR\normalsize($valid$, $valid_{par}$, $trace_w$, $tuple$, $w2$)
\EndIf
\State $l3 \gets \text{translation length corresponding to }w3$
\If{l3 \text{is less than} D}
\State \small SAVE PAR\normalsize($valid$, $valid_{par}$, $trace_w$, $tuple$, $w3$)
\EndIf
\Else 
\State $w \gets \text{Matrix corresponding to tuple}$
\State $d\gets \text{$w$-distance from} \ P=\{0, i, \infty\} \ \text{to} \ w(P)$
\If{d \text{is less than} D}
\State $valid_{hyp} \gets \text{tuple}$
\State $l \gets \text{translation length corresponding to }w$
\If{l \text{is less than} D}
\State \small SAVE\normalsize($valid$, $trace_w$, $tuple$, $w$)
\EndIf
\State $w2, w3 \gets \text{Matrices corresponding to tuple, with one and two twists}$
\State $l2 \gets \text{translation length corresponding to }w2$
\If{l2 \text{is less than} D}
\State \small SAVE\normalsize($valid$, $trace_w$, $tuple$, $w2$)
\EndIf
\State $l3 \gets \text{translation length corresponding to }w3$
\If{l3 \text{is less than} D}
\State \small SAVE\normalsize($valid$, $trace_w$, $tuple$, $w3$)
\EndIf
\EndIf
\EndIf
\EndFor
\EndProcedure
\algstore{myalg}
\end{algorithmic}
\end{algorithm}

\setcounter{algorithm}{0}
\begin{algorithm}[H]
\caption{Computes the list of words of translation length less than D}
\begin{algorithmic}[1]
\algrestore{myalg}
\Procedure{Trans. length}{$D$}
\\
\BState Iterative case:
\State n=3
\While{\text{lists} \ $valid_{hyp}$ \ \text{or} \ $valid_{par}$ \ \text{are non-empty}}
\State $tocheck_{hyp} \gets \text{empty the list}$
\State $tocheck_{hyp} \gets \text{(n+1)-tuples of indices from} \ valid_{hyp} \ \text{and} \ \{0,1,2\}$
\State $tocheck_{par} \gets \text{(n+1)-tuples of indices from} \ valid_{par}$ \ \text{by repeating 1st index}
\State $valid_{hyp} \gets \text{empty the list}$
\State $valid_{par} \gets \text{empty the list}$
\\
\For{\text{each tuple in} $tocheck_{hyp}$} 
\State $w \gets \text{Matrix corresponding to tuple}$
\State $d\gets \text{$w$-distance from} \ P=\{0, i, \infty\} \ \text{to} \ w(P)$
\If{d \text{is less than} D}
\State $valid_{hyp} \gets \text{tuple}$
\State $l \gets \text{translation length corresponding to }w$
\If{l \text{is less than} D}
\State \small SAVE\normalsize($valid$, $trace_w$, $tuple$, $w$)
\EndIf
\State $w2, w3 \gets \text{Matrices corresponding to tuple, with one and two twists}$
\State $l2 \gets \text{translation length corresponding to }w2$
\If{l2 \text{is less than} D}
\State \small SAVE\normalsize($valid$, $trace_w$, $tuple$, $w2$)
\EndIf
\State $l3 \gets \text{translation length corresponding to }w3$
\If{l3 \text{is less than} D}
\State \small SAVE\normalsize($valid$, $trace_w$, $tuple$, $w3$)
\EndIf
\EndIf
\EndFor
\\
\For{\text{each tuple in} $tocheck_{par}$} 
\State $w_p \gets \text{Matrix corresponding to tuple}$
\State $l_p \gets \text{translation length corresponding to} \ w_p$
\If{$l_p$  \text{is less than}  D}
\State \small SAVE PAR\normalsize($valid$, $valid_{par}$ $trace_w$, $tuple$, $w_p$) 
\EndIf
\EndFor
\State $n \gets n+1$
\EndWhile
\\
\State $length \gets \text{length of list} \ valid$
\State \Return $valid$, $length$, $trace_w$ \Comment{returns the list of all valid words, its length, and the list of their traces}

\EndProcedure
\end{algorithmic}
\end{algorithm}

\begin{algorithm}[H]
\caption{Arranges and saves the valid words coming from parabolics}\label{arrange2}
\begin{algorithmic}[2]
\Procedure{Save par}{$valid$, $valid_{par}$, $trace_w$, $tuple$, $w$}
\State $tc \gets \text{change indices of tuple to 9 matrix system}$
\State $valid_{par} \gets tc$ 
\State $valid \gets tc$ 
\State $trace_w \gets \text{trace of }w$
\EndProcedure
\end{algorithmic}
\end{algorithm}

\begin{algorithm}[H]
\caption{Arranges and saves the valid words}\label{arrange}
\begin{algorithmic}[2]
\Procedure{Save}{$valid$, $trace_w$, $tuple$, $w$}
\State $tc \gets \text{change indices of tuple to 9 matrix system}$
\State $valid \gets tc$ 
\State $trace_w \gets \text{trace of }w$
\EndProcedure
\end{algorithmic}
\end{algorithm}

As a note, we can think of this program in a more geometric way: when computing all words of $w$-distance less that $D$, we are constructing a polyhedron made of octahedra which contains all paths $w$ whose $w$-distance $d(w)$ with respect to some initial face P is less than $D$.

Once we have this list of words of translation length $<D$, we group them in classes of words -by analysing, for each one, the conditions that define the equivalence class-, and compute the cardinals of each class. On the other hand, we compute the complete list of translation lengths that appear up to the number $D$. These lists give us, then, all the information we need to compute $S_c$. %: for each length $l_i$, we look at the classes of word that carry this length, and compute the corresponding $p_i$.

Taking $D=4.6$, we obtain a list of 31 lengths, and a value of $S_c$:
\[ S_c = \sum_{i=1}^{31} p_i\cdot l_i = 2.56033312683887522062 \ldots \]

The entire SageMath code used for the computation is available at:
\url{https://github.com/annaroigsanchis/Systole-computation---Sage-code}.

\subsection{The error term $S_e$} 

In order to bound the sum $S_e$, we subdivide it into blocks, and bound each of these, that is,
\[ S_e = \sum_{i=32}^{\infty} p_i \cdot l_i = \sum_{i: l_i \in (l_{31}, \lceil l_{32} \rceil)} p_i \cdot l_i \ + \sum_{k=\lceil l_{32} \rceil}^{\infty} \sum_{i: l_i \in [k, k+1)} p_i \cdot l_i.\]

Observe that in these sums, the lengths $l_i$ can be bounded by $\lceil l_{32} \rceil$ and $k+1$ respectively, which are natural numbers. Thus, to get a sharper bound on the error term, then, we decrease the growth in the sum over $k$ by defining: 
\[ \tau(l_i)=2\cosh\Big(\frac{l_i}{2}\Big),\]
and re-writing the previous expression as: 
\[ S_e = \sum_{i: \tau(l_i) \in (\tau(l_{32}), \lceil \tau(l_{32}) \rceil)} p_i \cdot l_i \ + \sum_{k=\lceil \tau(l_{32}) \rceil}^{\infty} \sum_{i: \tau(l_i) \in [k, k+1)} p_i \cdot l_i \eqqcolon A + B.\]

The term A can be bounded by: 
\begin{align*}
    A &\leq 2\arccosh\Big(\frac{\lceil \tau(l_{32}) \rceil}{2} \Big) \lim_{n\rightarrow\infty} \mathbb{P}[\Z = 0, \ \forall \ [w]\in \mathcal{W} \ : \tau(l_{\gamma}(w)) < \tau(l_{32})] \\ 
    &= 2\arccosh\Big(\frac{\lceil \tau(l_{32}) \rceil}{2} \Big) \lim_{n\rightarrow\infty} \mathbb{P}[\Z = 0, \ \forall \ [w]\in \mathcal{W}_{l_{31}}] = 2.9220\cdot 10^{-16}.
\end{align*}
%\[ A \leq 2\arccosh\Big(\frac{\lceil \tau(l_{32}) \rceil}{2} \Big) \lim_{n\rightarrow\infty} \mathbb{P}[\Z = 0 \ \text{for all} \ [w]\in \mathcal{W}_{l_{31}}] = 0.0017758,\]
where the probability can found using the computation for $S_c$. 

Now, let's study B. As before, we have that: 
\[ B \leq \sum_{k=\lceil \tau(l_{32}) \rceil}^{\infty} 2\arccosh\Big(\frac{k+1}{2} \Big) \lim_{n\rightarrow\infty} \mathbb{P}[\Z = 0, \ \forall \ [w]\in \mathcal{W} \ : \tau(l_{\gamma}(w)) < k]. \]
Denote by $q_{[0,k)} = \mathbb{P}[\Z = 0, \ \forall \ [w]\in \mathcal{W} \ : \tau(l_{\gamma}(w)) < k]$. We can decompose $q_{[0,k)}$ as: 
\[ q_{[0,k)} = q_{[0,\tau(l_{32}))} \cdot q_{[\tau(l_{32}), k)}.\]
The first factor corresponds to the same probability as in A, so we can compute it. Like this, we get:
\[ B \leq e^{-\frac{112}{3}}\sum_{k=\lceil \tau(l_{32}) \rceil}^{\infty} 2\arccosh\Big(\frac{k+1}{2} \Big) \lim_{n\rightarrow\infty} \mathbb{P}[\Z = 0, \ \forall [w]\in \mathcal{W}: \ \tau(l_{\gamma}(w)) \in [\tau(l_{32}), k)]. \]
To get a sharper bound, we study the first term $B_1$ (when $k=\lceil \tau(l_{32}) \rceil$) separately. Using computational data, we have that the probability:
\[ \mathbb{P}[\Z = 0, \ \forall \ [w]\in \mathcal{W} \ : \tau(l_{\gamma}(w)) \in [\tau(l_{32}), \lceil \tau(l_{32}) \rceil)]\]
can be bounded above by:
\[ \mathbb{P}[\Z = 0, \ \text{for} \ [w] \in \Lambda],\]
where:
\[\Lambda = \bigg\{\begin{array}{c} [S^{10}\theta], [S^{10}\theta^2], [S^2(L\theta^2)^2 S\theta], [S^2(L\theta)^2 S\theta], [S^2 (L\theta^2)^2 R\theta], \\ 
\ [S^2(R\theta)^2 L\theta^2], [S^3(L\theta^2)^2R\theta^2], [S^3(R\theta)^2L\theta], [S^3(S\theta)^2R], [S^3(S\theta^2)^2L] 
\end{array}\bigg\}.\]

Each of these classes of words have $\lambda_{[w]}=\frac{1}{2}$, so by Theorem \ref{limitZn} we have that, as $n\rightarrow\infty$, this probability is exactly $e^{-5}$. Hence, the first term is bounded by: 
\[ B_1 \leq e^{-\frac{112}{3}}2\arccosh\Big(\frac{\lceil \tau(l_{32}) \rceil+1}{2} \Big)e^{-5} = e^{-\frac{112}{3}}2\arccosh(6)e^{-5} = 7.51077 \cdot 10^{-19}.\]
Finally, we deal with the remaining term, that is: 
\[ B_2 = e^{-\frac{112}{3}}\sum_{k=\lceil \tau(l_{32}) \rceil+1}^{\infty} 2\arccosh\Big(\frac{k+1}{2} \Big) \lim_{n\rightarrow\infty} \mathbb{P}[\Z = 0, \ \forall \ [w]\in \mathcal{W} \ : \tau(l_{\gamma}(w)) \in [\tau(l_{32}), k)].\]
Similarly as before, we argue that this probability above can be bounded by: 
\[\mathbb{P}[\Z = 0, \ \text{for} \ [w] \in \{ \Lambda \cup [S^{r-3}R\theta], \ r\in [\lceil \tau(l_{32}) \rceil, k]\} ].\]
Indeed, one can compute that $\tau(l([S^{k-3} R\theta])) = \text{tr}([S^{k-3} R\theta])=k-1$, which verifies the condition: $\tau(l_{\gamma}(w)) \in [\tau(l_{32}), k)$. On the other hand, it is not hard to see that $|[S^{k}R\theta]|=3^k 2k$. Therefore, we have that: 
\[\mathbb{P}[\Z = 0, \ \text{for} \ [w] \in \{ \Lambda \cup [S^{r-3}R\theta], \ r\in [\lceil \tau(l_{32}) \rceil, k]\}] \leq e^{-5} \cdot e^{\lceil \tau(l_{32}) \rceil - k}.\]
Hence, $B_2$ can be bounded by: 
\begin{align*}
    B_2 &\leq 2e^{-\frac{112}{3}}e^{-5}e^{\lceil \tau(l_{32}) \rceil} \sum_{k=\lceil \tau(l_{32}) \rceil+1}^{\infty} \arccosh\Big(\frac{k+1}{2} \Big) e^{-k} \\
    &= 2e^{-\frac{112}{3}}e^{6} \sum_{k=12}^{\infty} \arccosh\Big(\frac{k+1}{2} \Big) e^{-k} \simeq 1.24718 \cdot 10^{-18}.
\end{align*}
%4.587951487482 
All together, we have that the error term $S_e$ is bounded by: 
\[ S_e = A + B \leq A + B_1 + B_2 \leq 2.95489 \cdot 10^{-16}.\]

Joining the values obtained for $S_c$ and $S_e$, we finally obtain: 
  
\begin{repprop}{sysvalue}
    We have:
    \[2.5603331268388752 \leq \lim_{n \rightarrow \infty} \mathbb{E}(\text{sys}(M_n)) \leq 2.5603331268388752 +  2.95489 \cdot 10^{-16}.\]  
\end{repprop}

%\printbibliography

\bibliographystyle{alpha}
\bibliography{bib}

\end{document}